\newcommand{\mt}[1]{\mathtt{#1}}
\newcommand{\re}{\mathbb{R}}
\newcommand{\cpx}{\mathbb{C}}
\newcommand{\N}{\mathbb{N}}
\newcommand{\half}{\frac{1}{2}}
\newcommand{\lmd}{\lambda}
\newcommand{\eps}{\epsilon}
\newcommand{\dt}{\delta}
\def\af{\alpha}
\def\rank{\mbox{rank}}
\newcommand{\sig}{\sigma}
\newcommand{\Sig}{\Sigma}
\newcommand{\reff}[1]{(\ref{#1})}
\newcommand{\mc}[1]{\mathcal{#1}}
\newcommand{\supp}[1]{\mbox{supp}(#1)}
\newcommand{\bdes}{\begin{description}}
\newcommand{\edes}{\end{description}}
\newcommand{\bal}{\begin{align}}
\newcommand{\eal}{\end{align}}
\newcommand{\bnum}{\begin{enumerate}}
\newcommand{\enum}{\end{enumerate}}
\newcommand{\bit}{\begin{itemize}}
\newcommand{\eit}{\end{itemize}}
\newcommand{\bea}{\begin{eqnarray}}
\newcommand{\eea}{\end{eqnarray}}
\newcommand{\be}{\begin{equation}}
\newcommand{\ee}{\end{equation}}
\newcommand{\baray}{\begin{array}}
\newcommand{\earay}{\end{array}}
\newcommand{\bsry}{\begin{subarray}}
\newcommand{\esry}{\end{subarray}}
\newcommand{\bca}{\begin{cases}}
\newcommand{\eca}{\end{cases}}
\newcommand{\bcen}{\begin{center}}
\newcommand{\ecen}{\end{center}}
\newcommand{\bbm}{\begin{bmatrix}}
\newcommand{\ebm}{\end{bmatrix}}
\newcommand{\bmx}{\begin{matrix}}
\newcommand{\emx}{\end{matrix}}
\newcommand{\bpm}{\begin{pmatrix}}
\newcommand{\epm}{\end{pmatrix}}
\newcommand{\btab}{\begin{tabular}}
\newcommand{\etab}{\end{tabular}}
\newtheorem{theorem}{Theorem}[section]
\newtheorem{thm}[theorem]{Theorem}
\newtheorem{prop}[theorem]{Proposition}
\newtheorem{lem}[theorem]{Lemma}
\theoremstyle{definition}
\newtheorem{exm}[theorem]{Example}
\newtheorem{rmk}[theorem]{Remark}
\begin{document}

\title[The Hierarchy of Local Minimums]
{The Hierarchy of Local Minimums in Polynomial Optimization}

\author[Jiawang Nie]{Jiawang Nie}
\address{Department of Mathematics, University of California San Diego,
9500 Gilman Drive,  La Jolla, CA 92093, USA.}

\email{njw@math.ucsd.edu}
\thanks{The research was partially supported by the NSF grants DMS-0844775
and DMS-1417985.}

\begin{abstract}
This paper studies the hierarchy of local minimums of a polynomial
in the vector space $\re^n$.
For this purpose, we first compute $H$-minimums,
for which the first and second order
necessary optimality conditions are satisfied.
To compute each $H$-minimum,
we construct a sequence of semidefinite relaxations,
based on optimality conditions.
We prove that each constructed sequence
has finite convergence, under some generic conditions.
A procedure for computing all local minimums is given.
When there are equality constraints, we have similar results for
computing the hierarchy of critical values
and the hierarchy of local minimums.
\end{abstract}

\keywords{critical point, local minimum, optimality condition,
polynomial optimization, semidefinite relaxation, sum of squares}

\subjclass{65K05, 90C22, 90C26}

\maketitle

\section{Introduction}

Let $f$ be a polynomial in $x:=(x_1,\ldots,x_n)$.
A point $u\in \re^n$ (the space of $n$-dimensional real vectors)
is a local minimizer of $f$
if there exists $\eps>0$ such that
$f(u)$ is the smallest value of $f$
on the ball $B(u,\eps):=\{x \in \re^n \mid  \|x-u\| \leq \eps\}$.
(Here $\| \cdot \|$ is the standard Euclidean norm on $\re^n$.)
Such $f(u)$ is called a local minimum of $f$.
The set of all local minimums of $f$ is always finite, if it is not empty.
This is because $f$ achieves only finitely
many values on the points where its gradient vanishes.
(This fact can be implied by Lemma~3.2 of \cite{Nie-jac},
or by the proof of Theorem~8 of \cite{NDS}.)
We order the local minimums monotonically
as $\nu_1 < \nu_2 < \cdots < \nu_N$.
We call $\nu_r$ the $r$-th local minimum of $f$.
The sequence $\{ \nu_r \}$ is called the hierarchy of local minimums of $f$.
This paper studies how to compute this hierarchy.

\medskip
\noindent
{\bf Background}\,
Clearly, if $u$ is a local minimizer of $f$,
then the gradient $\nabla f(u) = 0$ and
the Hessian $\nabla^2 f(u) \succeq 0$ (positive semidefinite).
Conversely, if $\nabla f(u) = 0$ and $\nabla^2 f(u) \succ 0$ (positive definite),
then $u$ is a strict local minimizer (cf.~\cite{Brks}).
This is a basic fact in nonlinear programming.
If $\nabla f(u) = 0$, regardless of $\nabla^2 f(u) \succeq 0$,
$u$ is called a critical point (or stationary point in the literature).
When $u$ is a critical point, if
$\nabla^2 f(u) \succeq 0$ but is singular,
we cannot conclude that $u$ is a local minimizer.
For such cases, higher order derivatives are required to make a judgement.
Indeed, it is NP-hard to check
whether a critical point is a local minimizer or not (cf.~\cite{MurKab}).
To see this, consider the special case that $f$ is a quartic form
(i.e., a homogeneous polynomial of degree four).
The origin $0$ is always a critical point of such $f$.
However, $0$ is a local minimizer of $f$ if and only if
$f$ is nonnegative everywhere.
So, checking local optimality at $0$ is equivalent to
checking whether $f$ is nonnegative everywhere.
As is known, checking nonnegativity of quartic forms is NP-hard
(cf.~\cite{Las01,Nes02}).

However, for generical cases, checking local optimality is easy.
For a degree $d$, let $\re[x]_d$ denote the set of all
real polynomials in $x$ with degrees $\leq d$.
As shown in \cite[Theorem~1.2]{Nie-opcd},
there exists a subset $Z$ of $\re[x]_d$, whose Lebsuge measure is zero,
such that for all $f \in \re[x]_d\backslash Z$,
a point $u$ is a local minimizer of $f$ if and only if $\nabla f(u) =0$ and
$\nabla^2 f(u) \succ 0$.
Note that $\re[x]_d\backslash Z$ is open dense in $\re[x]_d$.
In other words, for generic $f$,
the conditions $\nabla f(u) =0$ and $\nabla^2 f(u) \succ 0$
are sufficient and necessary for $u$ to be a local minimizer.
This fact has been observed quite a lot in practice.

We would like to remark that a polynomial might have no local
minimizers, even if it is bounded from below.
For instance, the polynomial $x_1^2 + (x_1x_2-1)^2$
is bounded from below by zero,
but does not have any local minimizer.
This is because $0$ is the unique critical point
but the Hessian is indefinite at it.
Its infimum over $\re^2$ is zero, but it is not achievable.

When the smallest local minimum $\nu_1$ equals the infimum of $f$ over $\re^n$,
i.e., $f$ has a global minimizer, there exists much work on computing $\nu_1$,
e.g., Lasserre \cite{Las01}, Parrilo \cite{ParMP},
Parrilo and Sturmfels \cite{PS03}.
These methods are based on sum of squares (SOS) relaxations,
and they often get $\nu_1$. When the SOS relaxations
are not exact, the value $\nu_1$ cannot be found by them.
For such cases, the gradient SOS relaxation
method by the author, Demmel and Sturmfels \cite{NDS} is useful.
A major advantage of this method is that we can always get $\nu_1$
if a global minimizer exists (cf.~\cite{NDS,Nie-jac}).
If $f$ does not have a global minimizer, this method
might get a value that is not $\nu_1$. For instance,
when this method is applied to
$x_1^2 + (x_1x_2-1)^2$, we get the value $1$,
which is not a local minimum.
Recently, there is much work on polynomial optimization.
We refer to Lasserre~\cite{LasBok}, Laurent~\cite{Lau},
Marshall~\cite{MarBk} and Scheiderer~\cite{Sch09}.

\medskip
\noindent
{\bf Motivations} \,
In the classical literature of polynomial optimization, most work is
for computing global minimums. However, little is done for
local minimums. If $f$ is unbounded from below,
how can we determine whether it has a local minimizer or not?
If it has one, how can we compute the smallest local minimum $\nu_1$?
If it does not, how can we get a certificate for its nonexistence?
For $k>1$, if $\nu_k$ exists, how can we compute it?
If $\nu_k$ does not exist, how can we identify its nonexistence?
Similar questions can be asked for critical values and
for constrained polynomial optimization.
Such questions are theoretically interesting and mathematically meaningful.
To the best of the author's knowledge, they are mostly open.
Critical values and local minimums have broad applications.
Here, we list some of them.

Tensor eigenvalues are of major interests in multilinear algebra.
Each symmetric tensor $\mc{A}$, of order $m$, can be equivalently represented
by a homogeneous polynomial $\mc{A}x^m$, of degree $m$ (cf.~\cite{CDN14}).
The Z-eigenvalues of $\mc{A}$ are the critical values of $\mc{A}x^m$
over the unit sphere. The Laplacian matrix of a graph
has useful properties (cf.~\cite{Moh92}). Its eigenvalues
can be used to estimate edge-densities in cuts. Similarly,
for hypergraphs, Z-eigenvalues of their Laplacian tensors
have important properties (cf.~\cite{HQ12,LQY13}).
For instance, the second largest Z-eigenvalue can be used to
estimate the bipartition widths.
Moreover, tensor eigenvalues have applications
in signal processing and diffusion tensor imaging
(cf.~\cite{Qi03,Qi10})).
Recently, Cui, Dai and the author \cite{CDN14}
worked on real eigenvalues of symmetric tensors.
The techniques developed in this paper
are very useful for such applications.

Low rank approximations for tensors are important in applications
(cf.~\cite{DeSLim08}).
The main task is to minimize a distance function,
which is a polynomial in a set of variables.
As discovered in \cite{DeSLim08},
such a polynomial often does not have global minimizers.
Hence, local minimizers are mostly wanted in applications.
In practice, people often want to find local minimizers
on which the function values are small.
This amounts to computing the hierarchy of local minimums.
After the hierarchy is obtained,
we are able to judge how good a local minimizer is.

Nonlinear complementarity problems have important applications.
For a function $F:\re^n \to \re^n$, the task is to find
vectors $u\in \re^n$ such that
\be \label{0<u:perp:F(u)>=0}
0 \leq u \perp F(u) \geq 0.
\ee
A broad and interesting class of such function $F$
is the gradient of a polynomial $f$.
Such problems have important applications in
mathematical programs with equilibrium constraints
(cf.~\cite{MPECbook}). For such cases, it can be shown that
$u$ satisfies \reff{0<u:perp:F(u)>=0} if and only if
$u$ is a critical point or local minimizer of $f$
over the nonnegative orthant. By replacing $x$ with
$x^2:=(x_1^2,\ldots,x_n^2)$, the problem is equivalent to
computing the hierarchy of critical points and local minimizers
of $f(x^2)$ in the space $\re^n$.

\medskip
\noindent
{\bf Contributions} \,
This paper studies how to compute the hierarchy of critical values and local minimums,
by using semidefinite relaxations.

First, we study how to get the hierarchy
of local minimums of a polynomial $f$ in the space $\re^n$.
We use the first and second order necessary optimality conditions
\be \label{opcd:1st2nd}
\nabla f(x) = 0, \quad \nabla^2 f(x) \succeq 0
\ee
for constructing semidefinite relaxations.
A point satisfying \reff{opcd:1st2nd} is called an $H$-minimizer,
and such $f(u)$ is called an $H$-minimum.
To compute an $H$-minimum,
we construct a sequence of semidefinite relaxations about moment variables.
We prove that each constructed sequence
has finite convergence to an $H$-minimum, under some generic conditions.
We give a procedure for computing all $H$-minimums.
After they are computed,
we show how to extract the hierarchy of local minimums from them.
The results are shown in Section~3.

\,

Second, we study how to compute the hierarchy of critical values
when there are equality constraints. Like the unconstrained case,
there are also finitely many critical values
and finitely many local minimum values.
To compute each critical value,
we construct a sequence of semidefinite relaxations,
by using optimality conditions.
We show that each constructed sequence has finite convergence.
We give a procedure for computing all real critical values.
Once they are found,
we show how to extract the hierarchy of local minimums from them.
The results are shown in Section~4.

\,

Third, we discuss some extensions and related questions.
This is shown in Section~5.
We begin with a review of necessary backgrounds in Section~2.

\section{Preliminaries}
\setcounter{equation}{0}

\noindent
{\bf Notation}
The symbol $\N$ (resp., $\re$, $\cpx$) denotes the set of
nonnegative integral (resp., real, complex) numbers.
The symbol $\re[x] := \re[x_1,\ldots,x_n]$
denotes the ring of polynomials in $x:=(x_1,\ldots,x_n)$
with real coefficients.
Let $\N^n_d :=\{ \af \in \N^n \mid \af_1+\cdots+\af_n \leq d \}$.
For a polynomial $p$, $\deg(p)$ denotes its degree.
For a real number $t$, $\lceil t \rceil$ denotes
the smallest integer that is greater than or equal to $t$.
For a positive integer $k$, denote $[k]:=\{1,2,\ldots, k\}$.
%
The superscript $^T$ denotes the transpose of a matrix or vector.
For a measure $\mu$, $\supp{\mu}$ denotes its support.

\subsection{Ideals, varieties and real algebra}

Here we review some basic facts in real and complex algebraic geometry.
We refer to \cite{BCR,CLO97} for details.

An ideal $I$ in $\re[x]$ is a subset of $\re[x]$
such that $ I \cdot \re[x] \subseteq I$
and $I+I \subseteq I$. For a tuple $h=(h_1,\ldots,h_m)$ in $\re[x]$,
$\langle h \rangle $ denotes
the smallest ideal containing all $h_i$, which equals the set
$h_1 \cdot \re[x] + \cdots + h_m  \cdot \re[x]$.
The $k$-th {\it truncation} of the ideal $\langle h \rangle $,
denoted as $\langle h \rangle_{k}$, is the set
\[
h_1 \cdot \re[x]_{k-\deg(h_1)} + \cdots + h_m  \cdot \re[x]_{k-\deg(h_m)}.
\]
%
%
A complex variety is the set of
common complex zeros of some polynomials. A real variety is the set of
common real zeros of some polynomials. For a polynomial tuple $h$,
its complex and real varieties are denoted respectively as
\begin{align*}
V_{\cpx}(h) := \{v\in \cpx^n \mid \, h(v) = 0 \}, \quad
V_{\re}(h)  := \{v\in \re^n \mid \,  h(v) = 0 \}.
\end{align*}

A polynomial $\sig$ is said to be sum of squares (SOS)
if $\sig = p_1^2+\cdots+ p_k^2$ for some $p_1,\ldots, p_k \in \re[x]$.
The set of all SOS polynomials in $x$ is denoted as $\Sig[x]$.
For a degree $m$, denote the truncation
$
\Sig[x]_m := \Sig[x] \cap \re[x]_m.
$
For a tuple $g=(g_1,\ldots,g_t)$,
its {\it quadratic module} is the set
\[
Q(g):=  \Sig[x] + g_1 \cdot \Sig[x] + \cdots + g_t \cdot \Sig[x].
\]
The $k$-th truncation of $Q(g)$ is the set
\[
\Sig[x]_{2k} + g_1 \cdot \Sig[x]_{d_1} + \cdots + g_t \cdot \Sig[x]_{d_t},
\]
where each $d_i = 2k - \deg(g_i)$.
For polynomial tuples $h$ and $g$,
the sum $\langle h \rangle + Q(g)$ is called {\it archimedean}
if there exists $p \in \langle h \rangle + Q(g)$ such that
$p(x) \geq 0$ defines a compact set in $\re^n$.
When $\langle h \rangle + Q(g)$ is archimedean,
if a polynomial $f$ is positive on the set $\{ h(x)=0, \, g(x)\geq 0 \}$,
then $f  \in \langle h \rangle + Q(g)$
(cf.~Putinar \cite{Put}).

\subsection{Truncated moment sequences}

Let $\re^{\N_d^n}$ be the space of real sequences indexed by $\af \in \N^n_d$.
A vector in $\re^{\N_d^n}$ is called a
{\it truncated moment sequence} (tms) of degree $d$.
A tms $y \in \re^{\N_d^n}$ defines the Riesz functional $\mathscr{L}_y$
acting on $\re[x]_d$ as
\[
\mathscr{L}_y\Big(\sum_{\af \in \N_d^n}
p_\af x_1^{\af_1}\cdots x_n^{\af_n} \Big) := \sum_{\af \in \N_d^n}  p_\af y_\af.
\]
For convenience, denote $x^\af := x_1^{\af_1}\cdots x_n^{\af_n}$ and
\be \label{df:<p,y>}
\langle p, y \rangle := \mathscr{L}_y(p).
\ee
We say that $y$ admits a representing measure supported in a set $T$
if there exists a Borel measure $\mu$ such that
its support $\supp{\mu} \subseteq T$ and
$y_\af = \int  x^\af \mt{d} \mu$
for all $\af \in \N_d^n$.

Let $q \in \re[x]$ with $\deg(q) \leq 2k$.
The $k$-th {\it localizing matrix} of $q$,
generated by a tms $y \in \re^{\N^n_{2k}}$,
is the symmetric matrix $L_q^{(k)}(y)$ satisfying
\[
vec(p_1)^T \Big( L_q^{(k)}(y) \Big) vec(p_2)  = \mathscr{L}_y(q p_1p_2)
\]
for all $p_1,p_2 \in \re[x]$ with
$\deg(p_1), \deg(p_2) \leq k - \lceil \deg(q)/2 \rceil$.
In the above, $vec(p_i)$ denotes the coefficient vector of the polynomial $p_i$.
When $q = 1$, $L_q^{(k)}(y)$ is called a {\it moment matrix} and is denoted as
\[
M_k(y):= L_{1}^{(k)}(y).
\]
The columns and rows of $L_q^{(k)}(y)$, as well as $M_k(y)$,
are indexed by $\af \in \N^n$ with $2|\af| + \deg(q) \leq 2k$.

Let $g=(g_1,\ldots, g_t)$ be a tuple of polynomials in $\re[x]$, and denote
\[
\mathcal{S}(g) := \{ x \in \re^n \mid g(x) \geq 0 \}.
\]
If a tms $y \in \re^{\N^n_{2k}}$ admits a measure
supported in $\mathcal{S}(g)$, then (cf.~\cite{CuFi05})
\be \label{Lg>=0}
M_k(y)\succeq 0, \quad L_{g_j}^{(k)}(y) \succeq 0 \, (j=1,\ldots, t).
\ee
The reverse is typically not true. Let
$
d_g = \max_{j} \, \{1,  \lceil \deg(g_j)/2 \rceil \}.
$
If $y$ satisfies \reff{Lg>=0} and the rank condition
\be \label{con:fec}
\rank \, M_{k-d_g}(y) \, = \, \rank \, M_k(y),
\ee
then $y$ admits a measure supported in $\mathcal{S}(g)$.
This was shown by Curto and Fialkow \cite{CuFi05}.
When \reff{Lg>=0} and \reff{con:fec} hold,
the tms $y$ admits a unique representing measure $\mu$ on $\re^n$;
moreover, the measure $\mu$ is $r$-atomic with $r=\rank\, M_k(y)$
(i.e., $\supp{\mu}$ consists of $r$ distinct points)
and supported in $\mathcal{S}(g)$.
%
%
The points in $\supp{\mu}$ can be
found by solving some eigenvalue problems
(cf.~Henrion and Lasserre \cite{HenLas05}).
For convenience, we say that $y$ is {\it flat with respect to} $g\geq 0$
if \reff{Lg>=0} and \reff{con:fec} are {\it both} satisfied.

For a tuple $h=(h_1,\ldots, h_m)$ of polynomials,
consider the semialgebraic set
\[
\mathcal{E}(h) := \{ x \in \re^n \mid h(x) = 0 \}.
\]
Clearly, $\mathcal{E}(h) = \mathcal{S}(\, h,-h\, )$.
We say that $y \in \re^{\N^n_{2k}}$
is flat with respect to $ h = 0$
if \reff{con:fec}, where $d_g$ is replaced by
$d_h=\max_i\{1, \lceil \deg(h_i)/2 \rceil\}$, and
\be \label{Lh=0}
L_{h_i}^{(k)}(y) = 0\, (i =1,\ldots, m),
\quad M_k(y) \succeq 0
\ee
are satisfied. If $y$ is flat with respect to $ h = 0$,
then $y$ admits a finitely atomic measure supported in $\mathcal{E}(h)$.

For two tms' $y \in \re^{\N^n_{2k} }$ and $z \in \re^{\N^n_{2l} }$ with $k<l$,
we say that $y$ is a truncation of $z$,
or equivalently, $z$ is an extension of $y$,
if $y_\af = z_\af$ for all $\af \in \N^n_{2k}$.
Denote by $z|_d$ the subvector of $z$
whose entries are indexed by $\af \in \N^n_d$.
Thus, $y$ is a truncation of $z$ if $z|_{2k} = y$.
Throughout the paper, if $z|_{2k} = y$ and $y$ is flat,
we say that $y$ is a flat truncation of $z$.
Similarly, if $z|_{2k} = y$ and $z$ is flat,
we say that $z$ is a flat extension of $y$.
Flat extension and flat truncation are proper criteria
for checking convergence of Lasserre's hierarchy
in polynomial optimization (cf.~\cite{Nie-ft}).

\subsection{Polynomial matrix inequalities}

Let $H \in \re[x]^{\ell \times \ell}$ be a symmetric matrix polynomial,
i.e., $H$ is an $\ell \times \ell$ symmetric matrix and
each entry $H_{ij}$ is a polynomial in $\re[x]$.
The $k$-th localizing matrix of $H$, generated by $y \in \re^{\N^n_{2k} }$,
is the block symmetric matrix $L_{H}^{(k)}(y)$ defined as
{\small
\[
L_{H}^{(k)}(y) \, := \,  \Big( L_{H_{ij}}^{(k)}(y)  \Big)_{1 \leq i, j \leq \ell }.
\]
} \noindent
Each block is a standard localizing matrix.
For $p = (p_1,\ldots, p_{\ell})  \in \re[x]^{\ell}$, denote
\[
vec(p) = [\, vec(p_1)^T \quad   \cdots \quad vec(p_{\ell})^T \,]^T.
\]
Then, one can verify that for all $p \in \re[x]^{\ell}$,
if each $\deg(H_{ij} p_i p_j ) \leq 2k$, then
\[
\mathscr{L}_y(  p^T H p ) =  vec(p)^T \Big( L_{H}^{(k)}(y) \Big) vec(p).
\]
Let $\Sig[x]^{\ell \times \ell}$
be the cone of all possible sums $a_1 a_1^T + \cdots + a_r a_r^T$
with $a_1, \ldots, a_r \in \re[x]^\ell$.
When $\ell =1$, $\Sig[x]^{\ell \times \ell}$ is the cone $\Sig[x]$.
The quadratic module of $H$ is
\[
Q(H):= \Sig[x] + \left\{\mbox{Trace} (H W): \,
W \in \Sig[x]^{\ell \times \ell} \right\}.
\]
The $k$-th truncation of $Q(H)$ is defined as
\[
Q_k(H):= \Sig[x]_{2k} + \left\{ \mbox{Trace} (H W): \,
W \in \Sig[x]^{\ell \times \ell}, \,
\deg(W_{ij} H_{ij}) \leq 2k \,\, \forall \, i,j \right \}.
\]
Consider the semialgebraic set
\[
\mathcal{S}(H) := \{ x \in \re^n \mid \, H(x) \succeq 0 \}.
\]
Let
$
d_H = \max_{i,j} \, \{1, \lceil \deg(H_{ij})/2 \rceil \}.
$
For a tms $y \in \re^{\N^n_{2k}}$, we say that
$y$ is {\it flat with respect to} $H \succeq 0$ if
\be \label{rk:flat:H>=0}
M_k(y) \succeq 0, \quad L_{H}^{(k)}(y) \succeq 0, \quad
\rank \, M_{k-d_H}(y) = \rank\, M_k(y).
\ee

\begin{prop} \label{flat:H>=0}
If a tms $y \in \re^{\N^n_{2k}}$ is flat
with respect to a polynomial matrix inequality $H \succeq 0$,
then it admits a unique representing measure;
moreover, this measure is $\rank\,M_k(y)$-atomic,
and is supported in $\mathcal{S}(H)$.
\end{prop}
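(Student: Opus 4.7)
The plan is to first invoke the Curto--Fialkow flat extension theorem recalled earlier to obtain the unique $r$-atomic representing measure $\mu$ of $y$ on $\re^n$, and then to exploit $L_H^{(k)}(y) \succeq 0$ through Lagrange-type interpolants at the atoms of $\mu$ in order to force $H \succeq 0$ at each atom.

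For the first step, since $d_H \geq 1$, the monotonicity chain $\rank M_{k-d_H}(y) \leq \rank M_{k-1}(y) \leq \rank M_k(y)$ combined with the rank equality in the definition of flatness forces $\rank M_{k-1}(y) = \rank M_k(y)$. The scalar Curto--Fialkow theorem applied to $M_k(y) \succeq 0$ with this rank collapse yields a unique representing measure $\mu = \sum_{i=1}^{r} \lmd_i \delta_{u_i}$ of $y$ on $\re^n$, with $r = \rank M_k(y)$, pairwise distinct atoms $u_i$, and weights $\lmd_i > 0$. This already gives the uniqueness and atomicity parts of the proposition; what remains is the inclusion $\{u_1,\ldots,u_r\} \subseteq \mathcal{S}(H)$. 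For the interpolation ingredient, the equality $\rank M_{k-d_H}(y) = r$ is key: writing $M_{k-d_H}(y) = V^T \diag(\lmd_1,\ldots,\lmd_r) V$, where the rows of $V$ are the monomial evaluations at the $u_i$, shows that $V$ has full row rank $r$. Hence the evaluation map $q \in \re[x]_{k-d_H} \mapsto (q(u_1),\ldots,q(u_r))$ is surjective, and for each $i \in [r]$ there exists $q_i \in \re[x]_{k-d_H}$ with $q_i(u_j) = \delta_{ij}$.

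To conclude, fix $i$ and an arbitrary $v \in \re^{\ell}$, and set $p = q_i v \in \re[x]^{\ell}$, i.e.\ $p_j = v_j q_i$. Each $H_{jj'} p_j p_{j'}$ has degree at most $2 d_H + 2(k - d_H) = 2k$, so the identity $\mathscr{L}_y(p^T H p) = \vec(p)^T L_H^{(k)}(y) \vec(p)$ from the definition of the matrix localizing matrix applies. The right-hand side is $\geq 0$ since $L_H^{(k)}(y) \succeq 0$, while the left-hand side, evaluated against $\mu$, equals
\[
\sum_{j=1}^{r} \lmd_j \, q_i(u_j)^2 \, v^T H(u_j) v \,=\, \lmd_i \, v^T H(u_i) v.
\]
Since $\lmd_i > 0$ and $v$ is arbitrary, $H(u_i) \succeq 0$, so $u_i \in \mathcal{S}(H)$. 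The one essentially new ingredient beyond Curto--Fialkow is this interpolation trick, and the only subtle point to keep honest is the degree bookkeeping: the definition $d_H = \max_{i,j}\{1, \lceil \deg(H_{ij})/2 \rceil\}$ is tuned precisely so that $p = q_i v$ with $q_i \in \re[x]_{k-d_H}$ stays within the range where the matrix localizing identity is valid.
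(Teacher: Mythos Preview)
Your proof is correct. It differs from the paper's argument in how the support inclusion $\supp{\mu} \subseteq \mathcal{S}(H)$ is established. The paper fixes a vector $\xi \in \re^{\ell}$, checks the identity $L_{\xi^{T} H \xi}^{(k)}(y) = (\xi \otimes I)^{T} L_{H}^{(k)}(y)(\xi \otimes I) \succeq 0$, and observes that this makes $y$ flat with respect to the \emph{scalar} inequality $\xi^{T} H(x)\xi \geq 0$; a second invocation of the scalar Curto--Fialkow theorem then forces each atom into $\mathcal{S}(\xi^{T} H \xi)$, and varying $\xi$ gives $H(u_i)\succeq 0$. Your route instead unpacks the atoms directly: you exploit $\rank M_{k-d_H}(y)=r$ to build Lagrange interpolants $q_i$ of degree at most $k-d_H$, and then test $L_H^{(k)}(y)\succeq 0$ against the rank-one vectors $p=q_i v$ to isolate $\lambda_i\, v^{T}H(u_i)v\geq 0$. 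The paper's approach is slightly slicker in that it treats the scalar support statement as a black box, while yours is more self-contained (it only needs the unconstrained flat extension theorem for existence and then argues the matrix support claim from scratch); the interpolation idea is of course exactly what underlies the scalar support statement anyway, so the two proofs are close in spirit.
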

\begin{proof}
Because of $M_k(y) \succeq 0$ and the rank condition in \reff{rk:flat:H>=0},
the tms $y$ admits a unique measure $\mu$ on $\re^n$,
and $\mu$ is $r$-atomic, where $r = \rank\,M_k(y)$,
by Theorem~1.1 of Curto and Fialkow \cite{CuFi05}.
Let $\supp{\mu} = \{ v_1, \ldots, v_r \}$ for $v_1, \ldots, v_r \in \re^n$.
We show that $v_i \in \mathcal{S}(H)$ for all $i$.
For each $\xi \in \re^\ell$, it holds that
for all $p \in \re[x]_{k-d_H}$
\[
vec(p) ^T L_{\xi^T H \xi}^{(k)}(y) vec(p) =
\mathscr{L}_y (\xi^T H \xi \cdot p^2) =
\]
\[
\mathscr{L}_y ( (\xi p)^T H (\xi p) ) = vec(p)^T
\left[ (\xi \otimes I)^T \Big( L_{H}^{(k)}(y) \Big) (\xi \otimes I) \right]  vec(p).
\]
(Here $\otimes$ denotes the standard Kronecker product.)
So, we have
\[
L_{\xi^T H \xi}^{(k)}(y) =
(\xi \otimes I)^T \Big( L_H^{(k)}(y) \Big) (\xi \otimes I) \succeq 0.
\]
This implies that $y$ is flat with respect to the inequality
$\xi^T H(x) \xi \geq 0$. Again, by Theorem~1.1 of \cite{CuFi05},
$y$ admits a unique measure and it is
supported in the set $\mathcal{S}(\xi^T H(x) \xi )$.
So, $\xi^T H(v_i) \xi \geq 0$ for all $i$.
This is true for all $\xi \in \re^\ell$,
so $\supp{\mu} \subseteq \mathcal{S}(H)$.
\qed
\end{proof}

\section{The hierarchy of local minimums in $\re^n$}
\setcounter{equation}{0}

This section studies how to find the hierarchy of local minimums
of a polynomial $f$ in the space $\re^n$.
First, we need to compute $H$-minimums.
A point $u \in \re^n$ is called an {\it $H$-minimizer} of $f$
if $\nabla f(u) = 0$ and $\nabla^2 f(u) \succeq 0$.
The set of all $H$-minimizers of $f$
is denoted as $\mc{H}(f)$. If $u \in \mc{H}(f)$,
$f(u)$ is called an $H$-minimum of $f$.
Since each $H$-minimum is a critical value,
the set of all $H$-minimums is also finite. We order them monotonically as
\[
f_1 < f_2 < \cdots < f_N.
\]
The value $f_r$ is called the $r$-th $H$-minimum of $f$.
Since $N$ is typically not known in advance,
we denote $f_{\infty} := \max_{1 \leq k \leq N} f_k$, the biggest $H$-minimum.

\subsection{The smallest $H$-minimum}

Clearly, the smallest $H$-minimum $f_1$
is equal to the optimal value of the problem
\be \label{min:f:Hf>=0}
\min \quad f(x) \quad s.t. \quad
\nabla f(x) = 0, \, \nabla^2 f(x) \succeq 0.
\ee
Since \reff{min:f:Hf>=0} has a polynomial matrix inequality,
we apply the hierarchy of semidefinite relaxations ($k=1,2,\ldots$):
\be \label{<f,y>:gf=0:Hf>=0}
\left\{ \baray{rl}
\vartheta_k^{(1)} : = \min & \langle f, y \rangle \\
 s.t. & \langle 1, y \rangle =1, \, L^{(k)}_{ f_{x_i} } (y)  = 0 \, (i \in [n]),   \\
      & M_k(y) \succeq 0, \, L^{(k)}_{\nabla^2 f} (y) \succeq 0.
\earay \right.
\ee
In the above, $f_{x_i}$ is the partial derivative of $f$
with respect to $x_i$.  In \reff{<f,y>:gf=0:Hf>=0},
the dimension of the decision variable $y$ is
$\binom{n+2k}{2k}$.
This kind of relaxations was
introduced in Henrion and Lasserre~\cite{HL06}.
The dual problem of \reff{<f,y>:gf=0:Hf>=0} is
\be \label{sos:gf+Hf}
\eta_k^{(1)} := \, \max \quad \eta \quad
s.t. \quad f - \eta \in \langle \nabla f \rangle_{2k} + Q_k( \nabla^2 f ).
\ee
(See \S 2 for the notation $\langle \nabla f \rangle_{2k}$ and $Q_k(\nabla^2 f)$.)
The properties of the relaxations \reff{<f,y>:gf=0:Hf>=0}
and \reff{sos:gf+Hf} are summarized as follows.

\begin{thm}
\label{thm:f1:R^n}
Let $\mc{H}(f)$ be the set of $H$-minimizers of $f$
and $f_1$ be the smallest $H$-minimum if it exists.

\bit

\item [(i)] If \reff{<f,y>:gf=0:Hf>=0} is infeasible
for some $k$, then $\mc{H}(f) = \emptyset$
and $f$ has no local minimizers.

\item [(ii)] If $V_{\re}(\nabla f)$ is compact
and $\mc{H}(f) = \emptyset$, then, for all $k$ big enough,
\reff{sos:gf+Hf} is unbounded from above and
\reff{<f,y>:gf=0:Hf>=0} is infeasible.

\item [(iii)] If $V_{\re}(\nabla f)$ is compact
and $\mc{H}(f) \ne \emptyset$,
then, for all $k$ big enough, $\vartheta_k^{(1)} = \eta_k^{(1)} = f_1$.

\item [(iv)] If $V_{\re}(\nabla f)$ is finite
and $\mc{H}(f) \ne \emptyset$, then, for all $k$ big enough,
every optimizer $y^*$ of \reff{<f,y>:gf=0:Hf>=0}
has a truncation $y^*|_{2t}$ that is flat
with respect to $\nabla f=0$ and $\nabla^2 f \succeq 0$.

\end{itemize}

\end{thm}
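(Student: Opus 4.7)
My plan is to handle the four parts using Dirac-measure evaluations for (i), a matrix Positivstellensatz under an archimedean hypothesis for (ii) and (iii), and a rank-stabilization argument for (iv).

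For Part~(i), for any $u \in \mc{H}(f)$, the Dirac-moment sequence $y_\af := u^\af$ satisfies $M_k(y) = [u]_k[u]_k^\top \succeq 0$, each $L^{(k)}_{f_{x_i}}(y) = 0$ (because $f_{x_i}(u) = 0$), and $L^{(k)}_{\nabla^2 f}(y) = \nabla^2 f(u) \otimes ([u]_{k-1}[u]_{k-1}^\top) \succeq 0$ (because $\nabla^2 f(u) \succeq 0$), so it is feasible for \reff{<f,y>:gf=0:Hf>=0} at every $k$. Hence infeasibility at some $k$ forces $\mc{H}(f) = \emptyset$; since the first- and second-order necessary conditions make every local minimizer an $H$-minimizer, this also rules out local minimizers.

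For Parts~(ii) and (iii), the first step is to establish archimedeanness of the module $\mc{M} := \langle \nabla f \rangle + Q(\nabla^2 f)$. Since $V_\re(\nabla f)$ is compact, I pick $R$ so that $R^2 - \|x\|^2 > 0$ on $V_\re(\nabla f)$; writing each equation $\nabla f = 0$ as a pair of inequalities, Schm\"udgen's Positivstellensatz places $R^2 - \|x\|^2$ in $\Sig[x] + \langle \nabla f \rangle \subseteq \mc{M}$, so $\mc{M}$ is archimedean. For Part~(iii), $f - (f_1 - \vareps) > 0$ on $\{\nabla f = 0,\, \nabla^2 f \succeq 0\}$ for every $\vareps > 0$, so the matrix Positivstellensatz of Scherer--Hol yields $f - (f_1 - \vareps) \in \langle \nabla f \rangle_{2k} + Q_k(\nabla^2 f)$ at some level $k$, whence $\eta_k^{(1)} \geq f_1 - \vareps$; combined with $\vartheta_k^{(1)} \leq f_1$ from Part~(i) and weak duality $\eta_k^{(1)}\le\vartheta_k^{(1)}$, this yields the convergence. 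Upgrading asymptotic convergence to \emph{finite} convergence is the delicate step, which I would handle by invoking Nie's optimality-condition / Jacobian-SDP machinery, noting that the matrix inequality $\nabla^2 f \succeq 0$ is precisely the built-in second-order information that triggers finite termination. For Part~(ii), the same matrix Positivstellensatz applied to the empty feasible set produces $-1 \in \langle \nabla f \rangle_{2k_0} + Q_{k_0}(\nabla^2 f)$; rescaling then places $f - \eta$ in the module for every $\eta$, so $\eta_k^{(1)} = +\infty$, and by weak duality \reff{<f,y>:gf=0:Hf>=0} is infeasible.

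For Part~(iv), the plan is to show that $\rank M_k(y^*)$ is uniformly bounded in $k$ whenever $V_\re(\nabla f)$ is finite, and then use monotonicity to produce a flat truncation. Finiteness of $V_\re(\nabla f)$ means the real radical of $\langle \nabla f \rangle$ has finite codimension in $\re[x]$; combined with $L^{(k)}_{f_{x_i}}(y^*) = 0$, which forces the Riesz functional of $y^*$ to annihilate the relevant truncated pieces of $\langle \nabla f \rangle$, a standard Nie-type flat-truncation argument bounds $\rank M_k(y^*)$ independently of $k$. The ranks are nondecreasing in $k$, so they stabilize at some level $t$ with $\rank M_{t - d_H}(y^*) = \rank M_t(y^*)$, i.e., $y^*|_{2t}$ is flat with respect to $\nabla f = 0$; flatness with respect to $\nabla^2 f \succeq 0$ at the same level follows from $L^{(t)}_{\nabla^2 f}(y^*) \succeq 0$ via Proposition~\ref{flat:H>=0}. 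The main obstacle throughout is the finite-convergence claim of Part~(iii): asymptotic convergence is immediate from the archimedean Positivstellensatz, but proving termination at some explicit level requires extracting structural genericity from the embedded second-order condition, and carefully unpacking this is the technical heart of the argument.
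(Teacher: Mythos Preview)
Your treatment of Parts~(i), (ii), and (iv) is essentially in line with the paper. For (i) the paper simply observes that \reff{<f,y>:gf=0:Hf>=0} relaxes \reff{min:f:Hf>=0}; your Dirac-measure argument is the unwinding of that. For (ii) the paper uses exactly the archimedeanness of $\langle\nabla f\rangle$ (via $-\|\nabla f\|^2\in\langle\nabla f\rangle$) and then invokes Klep--Schweighofer's matrix Positivstellensatz rather than Scherer--Hol, but the logic is the same. For (iv) the paper cites Proposition~4.6 of Lasserre--Laurent--Rostalski directly; your rank-stabilization sketch is a paraphrase of what that proposition proves.

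The genuine gap is in Part~(iii). You correctly see that the archimedean matrix Positivstellensatz gives only \emph{asymptotic} convergence: for each $\varepsilon>0$ you obtain $f-f_1+\varepsilon\in\langle\nabla f\rangle_{2k(\varepsilon)}+Q_{k(\varepsilon)}(\nabla^2 f)$, with no control on $k(\varepsilon)$ as $\varepsilon\to 0$. Your proposed fix, to ``invoke Nie's optimality-condition / Jacobian-SDP machinery,'' does not apply here. Those finite-convergence results are stated for scalar inequality constraints and, more importantly, require that second-order sufficiency and strict complementarity hold at every minimizer of the auxiliary problem; Part~(iii) assumes only compactness of $V_{\re}(\nabla f)$, with no genericity hypothesis on $f$, so those conditions can fail. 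There is no black box you can cite.

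What the paper actually does is a constructive argument with a degree bound that is \emph{uniform in $\varepsilon$}. It decomposes $V_{\cpx}(\nabla f)=U_0\cup U_1\cup\cdots\cup U_t$ into disjoint varieties on each of which $f$ is constant (this is the special feature of the gradient ideal), takes a corresponding primary decomposition $\langle\nabla f\rangle=G_0\cap\cdots\cap G_t$, and builds on each piece an explicit certificate $\sigma_i^\varepsilon$: a square-root Taylor expansion modulo $G_i$ when $v_i\ge f_1$, and a Klep--Schweighofer certificate $-1\in G_i+Q(\nabla^2 f)$ when $v_i<f_1$ (since on those components $\nabla^2 f$ cannot be psd). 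These are glued by polynomials $a_i$ with $\sum a_i^2-1\in\langle\nabla f\rangle$. The point is that every ingredient has degree bounded independently of $\varepsilon$, which is exactly what yields finite convergence. This construction is the technical heart you allude to but do not supply, and it is not recoverable from the machinery you cite.
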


\begin{rmk}  \label{rmk:thm3.1}
1) For generic $f$, the real variety $V_{\re}(\nabla f)$ is finite (cf.~\cite{NR09}).
So, the assumption that $V_{\re}(\nabla f)$ is compact or finite
is almost always satisfied. In the computation,
we do not need to check whether $V_{\re}(\nabla f)$ is compact or not.
The compactness of $V_{\re}(\nabla f)$ is only used in the
proofs of items (ii)-(iii). It is not clear whether or not the compactness
assumption can be removed, to have same conclusions. \,
\\
2) When $y^*|_{2t}$ is flat, it admits a finite measure $\mu$ supported in
$V_{\re}(\nabla f) \cap \{ \nabla^2 f \succeq 0 \}$ by Prop.~\ref{flat:H>=0}.
For such case,  each point in $\supp{\mu}$ is a minimizer of \reff{min:f:Hf>=0}
and $\vartheta_k^{(1)}=\eta_k^{(1)}=f_1$.
So, (iv) provides a criterion (i.e., $y^*$ has a flat truncation $y^*|_{2t}$)
to check the convergence of $\vartheta_k^{(1)},\eta_k^{(1)}$.
We refer to \cite{HenLas05,Nie-ft}.
Among the set of all optimizers of \reff{<f,y>:gf=0:Hf>=0},
if $\rank\,M_k(y^*)$ is maximum and $y^*|_{2t}$ is flat,
then we can get all $H$-minimizers associated to $f_1$.
When \reff{<f,y>:gf=0:Hf>=0} is solved by primal-dual interior point methods,
an optimizer $y^*$ with $\rank\,M_k(y^*)$ maximum is often computed.
We refer to Laurent \cite[\S6.6]{Lau}.
\end{rmk}

\begin{proof}[Proof of Theorem~\ref{thm:f1:R^n}]  \,

(i) This is obvious, because
\reff{<f,y>:gf=0:Hf>=0} is a relaxation of \reff{min:f:Hf>=0}.
%
%

(ii) Since $V_{\re}( \nabla f)$ is compact,
the ideal $\langle \nabla f \rangle$ is archimedean,
because $- \| \nabla f \|^2 \geq 0$ defines a compact set in $\re^n$.
If $\mc{H}(f) = \emptyset$,
then $-\nabla^2 f(x) \npreceq 0$ for all $x \in V_{\re}(\nabla f)$.
By Corollary~3.16 of Klep and Schweighofer \cite{KlSc10}, we have
\[
- 1 \in \langle \nabla f \rangle + Q(\nabla^2 f).
\]
Thus, for all $k$ big enough, \reff{sos:gf+Hf} is unbounded from above,
which then implies the infeasibility of \reff{<f,y>:gf=0:Hf>=0}
by weak duality.

(iii) By Lemma~2 of \cite{NDS} (or Lemma~3.2 of \cite{Nie-jac}),
there exist disjoint complex varieties $U_0, U_1, \ldots, U_t$ such that
\be \label{dcmp:V(gf)}
V_{\cpx}(\nabla f) = U_0 \cup U_1 \cup \cdots \cup U_t,
\ee
where $U_0 \cap \re^n = \emptyset$, $U_i \cap \re^n \ne \emptyset$
and $f \equiv v_i \in \re$ on $U_i$ for $i=1,\ldots,t$.
Order them as
$v_1 > v_2 > \cdots > v_t$. Corresponding to  \reff{dcmp:V(gf)},
the ideal $\langle \nabla f \rangle$
has a primary decomposition (cf.~\cite[Chapter~5]{Stu02})
\be \label{primry:<gf>}
\langle \nabla f \rangle = G_0 \,\cap \, G_1 \,\cap \,\cdots \,\cap \, G_t
\ee
such that $G_i\subseteq \re[x]$ is an ideal and $U_i = V_{\cpx}(G_i)$,
for $i=1,\ldots,t$. Up to shifting $f$ by a constant,
we can assume that $f_1 =0$. Since $\mc{H}(f) \ne \emptyset$,
there exists an idex $\ell>0$ such that $v_\ell = f_1=0$.

To prove the item (iii), it is enough to show that
there exists $N^*$ such that, for all $\eps>0$, we have
\be \label{f+eps:Q(g+H):1st}
f + \eps  = \sig_\eps + \phi_\eps, \, \quad
\sig_\eps \in  Q_{N^*}( \nabla^2 f ), \quad
\phi_\eps \in \langle \nabla f \rangle_{2N^*}.
\ee
In the following, we show how to construct such desired $\sig_\eps$ and $\phi_\eps$.

\smallskip
For $i=0$, $V_{\re}(G_0) \cap \re^n = \emptyset$.
By Real Nullstellensatz (cf.~\cite[Corollary~4.1.8]{BCR}),
there exists $\tau_0 \in \Sig[x]$ such that
$1 + \tau_0  \in  G_0$. From
$f = \frac{1}{4} (f+1)^2 - \frac{1}{4} (f-1)^2$, we get
\begin{align*}
f & \equiv \sig_0 := \frac{1}{4} \left\{
(f+1)^2 + \tau_0 (f-1)^2   \right\}
\quad \mod \quad G_0.
\end{align*}
If $N_0 \geq \deg(f\tau_0)$, then
\[
\sig_0^\eps := \sig_0 + \eps \in \Sig_{2N_0} \subseteq  Q_{N_0}(\nabla^2 f)
\]
for all $\eps>0$. Let $q_0^\eps := f+\eps - \sig_0^\eps$.
Note that $\eps$ is canceled in the subtraction $f+\eps - \sig_0^\eps$,
so $q_0^\eps$ is independent of $\eps$.
Clearly, $q_0^\eps \in \, G_0$.

\smallskip
For $i=1,\ldots, \ell-1$, $v_i>0$ and $v_i^{-1} f - 1 \equiv 0$ on $U_i=V_{\cpx}(G_i)$.
By Hilbert's Strong Nullstellensatz (cf.~\cite{CLO97}),
$( v_i^{-1} f - 1)^{k_i} \in G_i$ for some $k_i \in \N$.
So,
\[
\baray{rl}
s_i := & \sqrt{v_i} \Big(1 \, + \, \big( v_i^{-1} f - 1 \big)\Big)^{1/2} \\
\, \equiv &  \sqrt{v_i}
\sum_{j=0}^{k_i-1} \binom{1/2}{j} \big(v_i^{-1} f - 1 \big)^j\,\,\,\,
\mod\,\,\,\, G_i\, .
\earay
\]
Let $\sig_i^\eps :=  s_i^2 + \eps$ and
$q_i^\eps = f + \eps - \sig_i^\eps \, \in \, G_i$.
In the subtraction $f + \eps - \sig_i^\eps$,
$\eps$ is canceled, so $q_i^\eps$ is independent of $\eps>0$.

\smallskip
When $i= \ell$, $v_\ell = f_1=0$ and $f \equiv 0$ on $U_\ell = V_{\cpx}(G_\ell)$.
By Hilbert's Strong Nullstellensatz,
$f^{k_\ell} \in G_\ell$ for some $k_\ell \in \N$.
Thus, we have
\[
\baray{rl}
s_\ell^\eps := & \sqrt{\eps} \left(1 + \eps^{-1} f \right)^{1/2} \\
 \equiv & \sqrt{\eps}  \sum_{j=0}^{k_\ell-1} \binom{1/2}{j} \eps^{-j} f^j\,\,\,\,
\mod\,\,\,\, G_\ell  .
\earay
\]
Let $\sig_\ell^\eps := (s_\ell^\eps)^2$ and
$q_\ell^\eps = f + \eps - \sig_\ell^\eps  \, \in \,  G_\ell$.
Note that $f^{k_\ell+j}  \in G_\ell$ for all $j \in \N$, and
\[
q_\ell^\eps = c_0(\eps) f^{k_\ell} + \cdots + c_{2k_\ell-2} (\eps) f^{2k_\ell-2}
\]
for some real scalars $c_j(\eps)$.
The degree of $q_\ell^\eps$ is independent of $\eps>0$.

For $i = \ell +1, \ldots, t$, we have $v_i < f_1 = 0$.
Thus, for all $x\in V_{\re}(G_i)$, we have $x \not\in \mc{H}(f)$, i.e.,
$-\nabla^2 f(x) \npreceq 0$. The ideal
$G_i \supseteq \langle \nabla f\rangle$ is archimedean,
because $V_{\re}(\nabla f)$ is compact.
By Corollary~3.16 of Klep and Schweighofer \cite{KlSc10},
there exists $\tau_i \in Q(\nabla^2 f)$ such that
$ 1 +\tau_i \in  G_i$. From
$f = \frac{1}{4} (f+1)^2 - \frac{1}{4} (f-1)^2$, we get
\begin{align*}
f & \equiv \frac{1}{4} \left\{
(f+1)^2 + \tau_i (f-1)^2   \right\}
\quad \mod \quad G_i.
\end{align*}
Let
$
\sig_i^\eps := \eps + \frac{1}{4} \left\{
(f+1)^2 + \tau_i (f-1)^2   \right\}.
$
Clearly, if $N_1>0$ is big enough, then
\[
\sig_i^\eps \in  Q_{N_1}(\nabla^2 f)
\]
for all $\eps>0$. Let
$q_i^\eps := f+\eps - \sig_i^\eps \, \in \, G_i$.
Since $\eps$ is canceled in the subtraction $f+\eps - \sig_i^\eps$,
$q_i^\eps$ is independent of $\eps > 0$.

\smallskip
The complex varieties of the ideals $G_i$ are disjoint from each other.
Applying Lemma~3.3 of \cite{Nie-jac} to $G_0,G_1,\ldots,G_t$,
we can get $a_0, \ldots, a_r \in \re[x]$ satisfying
{\small
\[
a_0^2+\cdots+a_t^2-1 \, \in \, \langle \nabla f \rangle, \quad a_i \in
\bigcap_{ j \ne i } G_j.
\]
} \noindent
Let $\sig_\eps = \sig_0^\eps a_0^2+\sig_1^\eps a_1^2 +
\cdots + \sig_t^\eps a_t^2$, then
\be
f + \eps  - \sig_\eps  = \sum_{i=0}^t (f+\eps-\sig_i^\eps) a_i^2
+ (f+\eps)(1-a_0^2-\cdots-a_t^2).
\ee
Since $q_i^\eps = f+\eps-\sig_i^\eps \in G_i$, it holds that
{\small
\[
(f+\eps-\sig_i^\eps) a_i^2 \, \in \, \bigcap_{j=0}^t G_j = \langle \nabla f \rangle.
\] \noindent}For all $i \ne \ell$,
we have seen that $q_i^\eps$ is independent of $\eps > 0$.
There exists $N_2 >0$ such that for all $\eps >0$
\[
(f+\eps-\sig_i^\eps) a_i^2 \, \in \, \langle \nabla f \rangle_{2N_2}
\, \mbox{ for all } \, i \ne \ell.
\]
For $i=\ell$, the degree of $q_\ell^\eps=f+\eps-\sig_\ell^\eps$
is independent of $\eps > 0$.
So, there exists $N_3 >0$ such that for all $\eps >0$
\[
(f+\eps-\sig_\ell^\eps) a_\ell^2 \, \in \, \langle \nabla f \rangle_{2N_3}.
\]
Since $1-a_1^2-\cdots-a_t^2 \in \langle \nabla f \rangle$,
there exists $N_4 >0$ such that for all $\eps >0$
\[
(f+\eps)(1-a_1^2-\cdots-a_t^2) \, \in \, \langle \nabla f \rangle_{2N_4}.
\]
Combining the above, we know that if $N^* \geq \max_{0\leq i \leq 4} N_i$, then
\[
\phi_\eps := f + \eps  - \sig_\eps \, \in \, \langle \nabla f \rangle_{2N^*}
\]
for all $\eps >0$.
From the constructions of $\sig_i^\eps$ and $a_i$,
we know their degrees are independent of $\eps$.
So, $\sig_\eps  \, \in \, Q_{N^*}(\nabla^2 f)$
for all $\eps >0$, if $N^*$ is big enough.
Therefore, \reff{f+eps:Q(g+H):1st} is proved.

(iv) Since $V_{\re}(\nabla f)$ is finite, by Proposition~4.6
of Lasserre, Laurent and Rotalski~\cite{LLR08},
there exists $t>0$ such that for
every $y$ that is feasible in \reff{<f,y>:gf=0:Hf>=0},
the truncation $y|_{2t}$ is flat with respect to $\nabla f= 0$.
Since $\deg(\nabla f) = \deg(\nabla^2 f)+1$ and $L_{\nabla^2 f}^{(t)}(y) \succeq 0$,
$y|_{2t}$ is also flat with respect to $\nabla^2 f \succeq 0$.
The conclusion follows since $y^*$ is feasible.
\qed
\end{proof}

\subsection{Bigger $H$-minimums}
\label{subsec:fr+1}

Suppose the $r$-th $H$-minimum $f_r$ is known.
We want to check whether $f_{r+1}$ exists or not;
if it does, we compute it.
For $\dt >0$, consider the problem
\be \label{v+:fk+dt}
\left\{\baray{rl}
\mathscr{H}^{>}(f_r + \dt) := \min & f(x) \\
s.t. & \nabla f(x) = 0, \, \nabla^2 f(x) \succeq 0, \, f(x) \geq f_r+\dt.
\earay \right.
\ee
Clearly, $\mathscr{H}^{>}(f_r + \dt)$ is the smallest
$H$-minimum $\geq f_r+\dt$. If $0< \dt \leq f_{r+1} - f_r$,
then $f_{r+1} = \mathscr{H}^{>}(f_r + \dt)$.
To solve \reff{v+:fk+dt}, we propose the following hierarchy
of semidefinite relaxations ($k=1,2,\cdots$):
\be \label{<f,y>:kth:mom}
\left\{ \baray{rl}
\vartheta_k^{(r+1)} : = \min & \langle f, y \rangle \\
 s.t. & L^{(k)}_{ f_{x_i} } (y)  = 0 \, (i\in [n]), \, L^{(k)}_{f-f_r-\dt} (y) \succeq 0,  \\
 & \langle 1, y \rangle = 1, \, M_k(y) \succeq 0, \, L^{(k)}_{\nabla^2 f} (y) \succeq 0.
\earay \right.
\ee
In \reff{<f,y>:kth:mom}, the dimension of the decision variable
$y$ is $\binom{n+2k}{2k}$.
Its dual problem is
\be \label{sos:kth:gf+Q}
\eta_k^{(r+1)} := \, \max \quad \gamma \quad
s.t. \quad f - \gamma \in \langle \nabla f \rangle_{2k} +
 Q_k(f-f_r-\dt) +  Q_k( \nabla^2 f ).
\ee

\begin{thm}  \label{thm:Hoc:fr}
Let $f_r$ be the $r$-th $H$-minimum of $f$,
and $f_\infty$ be the biggest one.

\bit

\item [(i)] If \reff{<f,y>:kth:mom} is infeasible
for some $k$, then \reff{v+:fk+dt} is infeasible
and $f_r + \dt > f_{\infty}$.

\item [(ii)] If $V_{\re}(\nabla f) \cap \{f(x) \geq f_r + \dt \}$ is compact
and $f_r + \dt > f_{\infty}$, then, for all $k$ big enough,
\reff{sos:kth:gf+Q} is unbounded from above and
\reff{<f,y>:kth:mom} is infeasible.

\item [(iii)] If $V_{\re}(\nabla f) \cap \{f(x) \geq f_r + \dt \}$ is compact
and $f_r + \dt \leq f_{\infty}$,
then, for all $k$ big enough,
$\vartheta_k^{(r+1)} = \eta_k^{(r+1)} = \mathscr{H}^{>}(f_r + \dt)$.

\item [(iv)] If $V_{\re}(\nabla f) \cap \{f(x) \geq f_r + \dt \}$ is finite
and $f_r + \dt \leq f_{\infty}$, then for all $k$ big enough,
every optimizer $y^*$ of \reff{<f,y>:kth:mom}
has a truncation $y^*|_{2t}$ that is flat with respect to $\nabla f = 0$,
$f-f_r -\dt \geq 0$ and $\nabla^2 f \succeq 0$.

\eit

\end{thm}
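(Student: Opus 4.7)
The plan is to adapt the four-part scheme of the proof of Theorem~\ref{thm:f1:R^n}, with two modifications required by the form of \reff{v+:fk+dt}: the quadratic module now carries the additional scalar piece $Q_k(f-f_r-\dt)$, and the compactness hypothesis is only on the slice $V_\re(\nabla f)\cap\{f\geq f_r+\dt\}$ rather than on all of $V_\re(\nabla f)$. Part~(i) is immediate from weak duality, since \reff{<f,y>:kth:mom} is a moment relaxation of \reff{v+:fk+dt}, and infeasibility of \reff{v+:fk+dt} is exactly the statement $f_r+\dt>f_\infty$.

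For part~(ii), the plan is to invoke Corollary~3.16 of Klep--Schweighofer \cite{KlSc10} with the ideal $\langle\nabla f\rangle$, the scalar module $Q(f-f_r-\dt)$, and the matrix inequality $\nabla^2 f\succeq 0$. Under the hypothesis $f_r+\dt>f_\infty$, every $x\in V_\re(\nabla f)\cap\{f\geq f_r+\dt\}$ must fail $\nabla^2 f(x)\succeq 0$, because otherwise $x$ would be an H-minimizer with value exceeding $f_\infty$. The compactness of this slice will then be used to conclude archimedean-ness of $\langle\nabla f\rangle+Q(f-f_r-\dt)$, after which the cited Positivstellensatz yields $-1 \in \langle\nabla f\rangle+Q(f-f_r-\dt)+Q(\nabla^2 f)$. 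Truncating at degree $2k$ makes \reff{sos:kth:gf+Q} unbounded from above and, by weak duality, \reff{<f,y>:kth:mom} infeasible.

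For part~(iii), I would shift $f$ so that $\mathscr{H}^{>}(f_r+\dt)=0$ and consider the primary decomposition $\langle\nabla f\rangle=G_0\cap G_1\cap\cdots\cap G_t$ associated to $V_\cpx(\nabla f)=U_0\cup U_1\cup\cdots\cup U_t$, with $f\equiv v_i$ on $U_i$. On each $G_i$ one builds a local SOS witness $\sig_i^\eps$ for $f+\eps$ and then glues them using the interpolators $a_0,\ldots,a_t$ from Lemma~3.3 of \cite{Nie-jac}, exactly as in the proof of Theorem~\ref{thm:f1:R^n}. The cases $U_0\cap\re^n=\emptyset$, $v_i>0$, $v_i=0$, and $v_i<0$ with $v_i$ not an H-minimum all transfer verbatim from that earlier proof. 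The genuinely new case is $v_i<f_r+\dt$: here $f-f_r-\dt\equiv v_i-f_r-\dt<0$ on $U_i$, so Klep--Schweighofer applied to $G_i$ with the scalar module $Q(f-f_r-\dt)$ supplies $\tau_i\in Q(f-f_r-\dt)$ with $1+\tau_i\in G_i$, and the identity $f=\frac{1}{4}(f+1)^2-\frac{1}{4}(f-1)^2$ then produces the required $\sig_i^\eps$. Assembling as before will give $f+\eps=\sig_\eps+\phi_\eps$ with $\sig_\eps\in Q_{N^*}(f-f_r-\dt)+Q_{N^*}(\nabla^2 f)$ and $\phi_\eps\in\langle\nabla f\rangle_{2N^*}$, uniformly in $\eps>0$, which delivers $\vartheta_k^{(r+1)}=\eta_k^{(r+1)}=\mathscr{H}^{>}(f_r+\dt)$ for all large $k$.

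For part~(iv), I plan to invoke Proposition~4.6 of Lasserre--Laurent--Rotalski \cite{LLR08} (in its semialgebraic form) applied to the system $\nabla f=0$, $f-f_r-\dt\geq 0$, whose joint real set is finite by hypothesis: there is a uniform $t$ such that every feasible $y$ of \reff{<f,y>:kth:mom} has $y|_{2t}$ flat with respect to $\nabla f=0$, and the psd conditions on $L^{(t)}_{f-f_r-\dt}(y)$ and $L^{(t)}_{\nabla^2 f}(y)$ then propagate flatness to the full constraint set. The main obstacle I anticipate is verifying the archimedean hypothesis in the new case of part~(iii) (and in part~(ii)): the individual $G_i$ with $v_i<f_r+\dt$ need not have compact real variety, so one must argue directly that $G_i+Q(f-f_r-\dt)$ is archimedean, either by extracting a ball bound from the compact components of the primary decomposition or by exploiting that $-(f-f_r-\dt)$ reduces to a strictly positive constant modulo $G_i$.
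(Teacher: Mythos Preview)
Your plan matches the paper's proof for (i), (ii), and (iv); the paper cites Remark~4.9 rather than Proposition~4.6 of \cite{LLR08} for (iv), but the content is the same. For (iii) you also follow the paper's scheme, but your case split for the indices with $v_i<0$ is misaligned and the archimedean obstacle you flag has a clean resolution you do not reach.

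After the shift $\mathscr{H}^{>}(f_r+\dt)=0$, the correct dichotomy for $i>\ell$ is not ``$v_i$ is an H-minimum or not'' but rather $v_i\ge f_r+\dt$ versus $v_i<f_r+\dt$. Since $f\equiv v_i$ on $V_\re(G_i)=U_i\cap\re^n$, in the first subcase $V_\re(G_i)=V_\re(G_i)\cap\{f\ge f_r+\dt\}$ lies in the compact slice and is itself compact; with generators $g_1,\dots,g_s$ of $G_i$ one has $-\sum g_j^2\in G_i$ and $\{-\sum g_j^2\ge 0\}=V_\re(G_i)$, so $G_i$ is already archimedean and the argument of Theorem~\ref{thm:f1:R^n}(iii) transfers (note $v_i$ is automatically not an H-minimum here). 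In the second subcase the joint set is empty, so Stengle's Positivstellensatz (which needs no archimedean hypothesis) gives $-1\in G_i+Q(f-f_r-\dt)$ directly; your appeal to Klep--Schweighofer here cites the wrong theorem, and your second proposed remedy fails as stated because $f-v_i$ lies only in $\sqrt{G_i}$, not in $G_i$. The paper handles both subcases at once by asserting (correctly, by the above) that $G_i+Q(f-f_r-\dt)$ is archimedean and then invoking Klep--Schweighofer to get $\tau_i\in Q(f-f_r-\dt)+Q(\nabla^2 f)$ with $1+\tau_i\in G_i$. Your ``$v_i<0$ not an H-minimum transfers verbatim'' is thus not safe: such a $v_i$ can have $v_i<f_r+\dt$, where $V_\re(G_i)$ need not be compact and the Theorem~\ref{thm:f1:R^n} argument does not apply without the Positivstellensatz step.
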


\begin{rmk}  \label{rmk:thm3.3}
The assumptions in Theorem~\ref{thm:Hoc:fr} are almost always satisfied;
we often get all $H$-minimizers on which $f$ equals $\mathscr{H}^{>}(f_r+\dt)$,
when \reff{<f,y>:kth:mom} is solved by primal-dual interior point methods.
We refer to Remark~\ref{rmk:thm3.1}.
\end{rmk}

\begin{proof}[Proof of Theorem~\ref{thm:Hoc:fr}]\,

(i) This is clear, because \reff{<f,y>:kth:mom} is a relaxation of \reff{v+:fk+dt}.

(ii) By the compactness of $V_{\re}(\nabla f) \cap \{ f(x) \geq f_r + \dt \}$,
we know $\langle \nabla f \rangle + Q( f - f_r -\dt)$ is archimedean.
Since $f_r + \dt > f_{\infty}$, \reff{v+:fk+dt} is infeasible.
So, $-\nabla^2 f(x) \npreceq 0$ for all
$x \in V_{\re}(\nabla f) \cap \{ f(x) \geq f_r + \dt \}$.
By Corollary~3.16 of Klep and Schweighofer \cite{KlSc10}, we get
\[
-1  \in  \langle \nabla f \rangle + Q( f - f_r -\dt) + Q(\nabla^2 f).
\]
So, for $k>0$ big enough, \reff{sos:kth:gf+Q} is unbounded from above
and \reff{<f,y>:kth:mom} is infeasible.

(iii) This can be proved in the same way as for
Theorem~\ref{thm:f1:R^n}(iii).
Here, we only list the differences.
First, we can get the decompositions
\reff{dcmp:V(gf)} and \reff{primry:<gf>}.
Note that $U_0 \cap \re^n = \emptyset$,
$U_i \cap \re^n \ne \emptyset$
and $f \equiv v_i \in \re$ for $i=1,\ldots, t$.
Up to shifting $f$ by a constant,
we can assume $\mathscr{H}^{>}(f_r+\dt) = 0$.
Choose the index $\ell>0$ such that $v_\ell = 0$.
Like \reff{f+eps:Q(g+H):1st}, it is enough to show that
there exists $N^*$ such that, for all $\eps>0$,
\be \label{f+eps:Q(g+H):rth}
\left\{ \baray{c}
f + \eps  = \phi_\eps  + \sig_\eps\, \quad
\phi_\eps \in \langle \nabla f \rangle_{2N^*},   \\
\sig_\eps \in Q_{N^*}(f-f_r-\dt) + Q_{N^*}( \nabla^2 f ).
\earay \right.
\ee
For $i=0, 1, \ldots, \ell$,
we can construct $\sig_i^\eps$ in the same way as in
in the proof of Theorem~\ref{thm:f1:R^n}(iii).

For $i=\ell+1,\ldots,t$, we have $v_i < 0$.
By the assumption, $V_{\re}(G_i) \cap \{ f(x) \geq f_r+\dt\}$ is compact,
so $G_i + Q(f-f_r-\dt)$ is archimedean.
For all $x \in V_{\re}(G_i) \cap \{ f(x) \geq f_r+\dt\}$,
we have $-\nabla^2 f(x) \npreceq 0$,
because otherwise we can get the contradiction
$\mathscr{H}^{>}(f_r+ \dt) \leq v_i <0$.
By Corollary~3.16 of Klep and Schweighofer \cite{KlSc10},
there exists $\tau_i \in Q(f-f_r-\dt)+ Q(\nabla^2 f)$ such that
$1 + \tau_i \in G_i.$
Then, we construct $\sig_i^\eps$ similarly
as in the proof of Theorem~\ref{thm:f1:R^n}(iii).
The rest of the proof is same.

(iv) The proof is same as for Theorem~\ref{thm:f1:R^n}(iv),
by using Remark~4.9 of Lasserre, Laurent and Rotalski~\cite{LLR08},
\qed
\end{proof}

Note that $\mathscr{H}^{>}(f_r+\dt)$ is the smallest
$H$-minimum $\geq f_r+\dt$. So, if $ 0 < \dt < f_{r+1} - f_r$,
then $f_{r+1} = \mathscr{H}^{>}(f_r+\dt)$.
We typically do not know if $\dt < f_{r+1} - f_r$ or not.
Here, we introduce a trick to verify this.
Consider the maximization problem
\be \label{maxf:vk(dt)}
\left\{\baray{rl}
\mathscr{H}^{<}(f_r + \dt) := \max & f(x) \\
s.t. & \nabla f(x) = 0, \, \nabla^2 f(x) \succeq 0, \, f(x) \leq f_r+\dt.
\earay \right.
\ee
The following lemma is obvious.

\begin{lem}
For $\dt>0$, $\mathscr{H}^{<}(f_r + \dt) = f_r$
if and only if $\dt < f_{r+1} - f_r$.
\end{lem}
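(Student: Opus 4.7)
The plan is to unpack the definition of $\mathscr{H}^{<}(f_r+\delta)$ and translate both directions of the biconditional into a simple comparison of $H$-minimum values. By construction of the feasible set in \reff{maxf:vk(dt)}, any feasible point $x$ is an $H$-minimizer, so $f(x)$ is one of the $H$-minimums $f_1 < f_2 < \cdots < f_N$; the constraint $f(x) \leq f_r+\delta$ further restricts to those $H$-minimums not exceeding $f_r+\delta$. Hence $\mathscr{H}^{<}(f_r+\delta)$ is exactly the largest element of $\{f_1,\ldots,f_N\}$ that lies in $(-\infty, f_r+\delta]$ (with the convention that this maximum is $-\infty$ if no such element exists, though existence of $f_r$ rules this out here).

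For the forward implication, I would argue by contrapositive: if $\delta \geq f_{r+1}-f_r$, then $f_{r+1} \leq f_r+\delta$, so $f_{r+1}$ is attained by some feasible $H$-minimizer in \reff{maxf:vk(dt)}; since $f_{r+1} > f_r$, this yields $\mathscr{H}^{<}(f_r+\delta) \geq f_{r+1} > f_r$, contradicting $\mathscr{H}^{<}(f_r+\delta)=f_r$. For the reverse implication, suppose $\delta < f_{r+1}-f_r$. Then no $H$-minimum strictly greater than $f_r$ satisfies the bound $\leq f_r+\delta$, because the smallest such $H$-minimum is $f_{r+1} > f_r+\delta$. Therefore the maximum of $f$ over the feasible set of \reff{maxf:vk(dt)} is attained exactly at the $H$-minimizers where $f$ equals $f_r$, proving $\mathscr{H}^{<}(f_r+\delta)=f_r$.

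One minor point to address is the boundary case $r=N$, where $f_{r+1}$ does not exist; the natural convention is $f_{r+1}-f_r = +\infty$, so the condition $\delta < f_{r+1}-f_r$ holds vacuously for every $\delta>0$, and correspondingly no $H$-minimum exceeds $f_r$, so $\mathscr{H}^{<}(f_r+\delta)=f_r$ trivially. There is no real obstacle here — the lemma is essentially a reformulation of the definition once the feasible set is identified with a subset of the discrete set of $H$-minimum values.
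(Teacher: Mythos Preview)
Your proof is correct. The paper simply declares this lemma ``obvious'' and gives no argument; your unpacking of the definition of $\mathscr{H}^{<}(f_r+\delta)$ as the largest $H$-minimum not exceeding $f_r+\delta$, followed by the two easy implications (and the handling of the boundary case $r=N$), is exactly the natural elaboration of that claim.
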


The optimal value $\mathscr{H}^{<}(f_r + \dt)$
can be computed by solving a hierarchy of semidefinite relaxations
that are similar to \reff{<f,y>:kth:mom}. Similar properties like in
Theorems~\ref{thm:f1:R^n} and \ref{thm:Hoc:fr}
can be proved. For cleanness of the paper, we omit them here.
Once $f_r$ is known, we can determine $f_{r+1}$
by the following procedure:

\bit

\item [0.] Choose a small positive value of $\dt$ (e.g., 0.01).

\item [1.] Compute the optimal value
$\mathscr{H}^{<}(f_r+\dt)$ of \reff{maxf:vk(dt)}.

\item [2.] Solve the hierarchy of semidefinite relaxations \reff{<f,y>:kth:mom}.

\bit

\item [a)]
If \reff{<f,y>:kth:mom} is infeasible for some $k$
and $\mathscr{H}^{<}(f_r+\dt) = f_r$,
then $f_r = f_{\infty}$ and stop.

\item [b)]
If \reff{<f,y>:kth:mom} is infeasible for some $k$
but $\mathscr{H}^{<}(f_r+\dt) > f_r$,
then decrease the value of $\dt$
(e.g., let $\dt:=\dt/2$) and go to Step~1.

\item [c)]
If \reff{<f,y>:kth:mom} is feasible for all $k$,
we generally get $\mathscr{H}^{>}(f_r+\dt)$ for $k$ big.
If $\mathscr{H}^{<}(f_r+\dt) = f_r$,
then $f_{r+1} = \mathscr{H}^{>}(f_r+\dt)$ and stop;
otherwise, decrease the value of $\dt$
(e.g., let $\dt:=\dt/2$) and
go to Step~1.

\eit

\eit

Once $f_{r+1}$ is computed, we use the same procedure to detect
whether $f_{r+2}$ exist or not; if it does, we can get it.
This process can be repeated to get all $H$-minimums.

In the above procedure, a value for $\dt$ satisfying
$0 < \dt < f_{r+1}-f_r$ is found by a kind of bisection process.
Since $f_{r+1}-f_r$ is positive,
this bisection process always terminates in finitely many steps.
In computation, the value of $\dt$ cannot be too small,
because otherwise there are numerical troubles for solving
the resulting semidefinite relaxations. However,
in practice, an initial value like $0.01$ is often small enough.
This is demonstrated by our examples. In applications
where $f_{r+1}-f_r$ is really tiny, a trick of scaling can be applied.
If we multiply $f$ with a constant $C$,
then the gap $f_{r+1}-f_r$ is changed to $C(f_{r+1}-f_r)$.
So, we can choose a big $C$,
then apply the same procedure for $Cf$ with a relatively large $\dt$.
Of course, this might cause numerical troubles sometimes.
We would also like to remark that the gap $f_{r+1}-f_r$
can be arbitrarily small, while the coefficients of
$f$ are not. For instance, consider the univariate polynomial
$
f = x^2(x^2-2x+1+\eps),
$
with a parameter $\eps>0$.
It has two local minimizers, which are respectively
$
0,  (3 + \sqrt{1-8 \eps})/4.
$
The gap is
\[
f_2 - f_1 = \frac{
1- \sqrt{1-8 \eps}^3+20 \eps - 8 \eps^2}{32}.
\]
Clearly, it goes to zero as $\eps \to 0$,
but the $\infty$-norm of the coefficient vector of $f$ is constantly $2$.
So, there does not exist
a uniform lower bound for the gap.

\subsection{Extracting local minimum values}

Once all $H$-minimums $f_1,\ldots, f_N$ are computed,
we can get the hierarchy of local minimums from them.
As mentioned in Remarks~\ref{rmk:thm3.1} and \ref{rmk:thm3.3},
for each $f_r$,
we often get all the associated $H$-minimizers,
when \reff{<f,y>:kth:mom} is solved by primal-dual interior-point methods.

Let $u$ be an $H$-minimizer of $f$, i.e.,
$\nabla f(u) =0$ and $\nabla^2 f(u) \succeq 0$, such that $f(u) = f_r$.
If $\nabla^2 f(u) \succ 0$,
then $f_r$ is a local minimum.
If $\nabla^2 f(u)$ is singular, we cannot make such a judgement.
For $\rho>0$, consider the optimization problem
\be \label{minf:|x-u|<rho}
f_{u,\rho} := \min \quad f(x) \quad
s.t. \quad  \rho^2 - \|x-u\|^2 \geq 0.
\ee
Clearly, $u$ is a local minimizer of $f$
if and only if $f(u) = f_{u,\rho}$ for some $\rho>0$.
This fact can be applied to verify local optimality of $u$.
The optimal value $f_{u,\rho}$ can be computed by
using the Jacobian SDP relaxation method in \cite{Nie-jac}.

\subsection{Examples}

The semidefinite relaxations \reff{<f,y>:gf=0:Hf>=0} and \reff{<f,y>:kth:mom}
can be solved by software {\tt YALMIP}~\cite{YALMIP} on moment relaxations,
which uses the SDP solver {\tt SeDuMi}~\cite{sedumi}.
We first apply the procedure at the end of \S \ref{subsec:fr+1}
to get all $H$-minimums, and then extract the hierarchy of local minimums.

Recall that $f_1 < \cdots < f_N$ are the hierarchy of
$H$-minimum values of the polynomial $f$, defined at the beginning of this section.
The optimal value $\mathscr{H}^{>}(f_r+\dt)$ is
defined in \reff{v+:fk+dt}, and  $\mathscr{H}^{<}(f_r+\dt)$ is
defined in \reff{maxf:vk(dt)}.
Such notation will be used in the following examples.

\begin{exm}
(i) Consider the polynomial $x_1^2 +(x_1x_2-1)^2$.
Its infimum over $\re^2$ is zero,
but it is not achievable (cf. \cite{NDS}).
It does not have a local minimizer, because
the relaxation~\reff{<f,y>:gf=0:Hf>=0} is infeasible for $k=3$.

\noindent
(ii) Consider the polynomial
\[
2x_2^4(x_1+x_2)^4+x_2^2(x_1+x_2)^2 + 2x_2(x_1+x_2)  + x_2^2
\]
Its infimum over $\re^2$ is $-5/8$, which is also not achievable
(cf.~\cite[Example~4.1]{VuiSon}).
This polynomial does not have a local minimizer either,
because the relaxation~\reff{<f,y>:gf=0:Hf>=0} is infeasible for $k=5$.
\end{exm}

\begin{exm}
(i)
Consider the dehomogenized Motzkin polynomial (\cite{Rez00})
\[
1+x_1^4x_2^2 + x_1^2x_2^4 - 3 x_1^2x_2^2.
\]
The first $H$-minimum $f_1 = 0$, achieved at $(\pm 1, \pm 1)$.
The Hessian $\nabla^2 f$ is positive definite on $(\pm 1, \pm 1)$,
so $f_1$ is the smallest local minimum.
The second $H$-minimum $f_2 = 1$, achieved on
the two lines $(t,0)$,$(0,t)$.
Since $(2,0)$ is a local minimizer
(verified by solving \reff{minf:|x-u|<rho}), $f_2$ is the second
local minimum. We know $f_2$ is the biggest $H$-minimum,
because \reff{<f,y>:kth:mom} is infeasible for $(k,\dt)=(3,0.001)$
and $\mathscr{H}^{<}(f_2+0.001)=1$.
The hierarchy of local minimums is $\{0,1\}$. \\
\noindent
(ii)
Consider the dehomogenized Robinson polynomial (\cite{Rez00})
\[
x_1^6+x_2^6 + 1 + 3x_1^2x_2^2 - x_1^4(x_2^2+1) - x_2^4(1+x_1^2) - (x_1^2+x_2^2).
\]
The first $H$-minimum $f_1 = 0$, achieved at points $(\pm 1, 0)$, $(0,\pm 1)$,
$(\pm 1, \pm 1)$. Since $\nabla^2 f(1,1)$ is positive definite,
$f_1$ is the smallest local minimum.
%
%
Because \reff{<f,y>:kth:mom} is infeasible for $(k,\dt)=(6, 0.01)$
and $\mathscr{H}^{<}(f_1+0.01)=0$, we know $f_1$
is the biggest $H$-minimum.
There is only one local minimum value.
\end{exm}

\begin{exm}
Consider the polynomial $f$ given as
\[
\baray{c}
x_1^6+x_2^6+x_3^6+x_4^6 - 5( x_1^3x_2^2+x_2^2x_3^3+x_3x_4^4)
+6(x_1^2x_2^2 + x_3^3x_4 + x_1x_2x_3x_4) \\
- 7(x_1x_2x_3 + x_2x_3x_4)
+ (x_1+x_2+x_3+x_4-1)^2-1.
\earay
\]
The 1st through 5th $H$-minimums $f_1,\ldots,f_5$ are as follows:
\bcen
\btab{|r|r|c|l|}  \hline
$r$ & $f_r$ & $H$-minimizers  & local optimality \\ \hline
 1  & $-1813.2169$  & $(3.0149,  3.3618,  3.7667,  -3.7482)$ & minimizer \\ \hline
 2  & $-1515.4286$  & $(-1.1245, -3.0510, 3.6415, -3.6848)$ &  minimizer\\ \hline
 3  & $-140.8532$ &  $(-0.6017, 2.2670, 2.4317, 2.7935)$ & minimizer \\ \hline
 4  & $-62.7880$  &  $( 2.2031, -2.3876, 2.4169, 2.7577)$ & minimizer \\ \hline
 5  & $-4.3786$  & $(0.8653, -0.3392, -1.2499,  0.7930 )$ & minimizer \\ \hline
\etab
\ecen
They are all local minimums, because
$\nabla^2 f \succ 0$ at the $H$-minimizers.
We know $f_5$ is the biggest $H$-minimum,
because \reff{<f,y>:kth:mom} is infeasible for $(k,\dt)=(4,0.01)$
and $\mathscr{H}^{<}(f_5+0.01)=f_5$.
The hierarchy of local minimums is $\{f_1, f_2, \ldots, f_5\}$.
\end{exm}

\begin{exm} \label{exmp3.9}
Consider the polynomial $f$ given as (cf.~\cite{NR09}):
\[
21 x_2^2-92 x_1  x_3^2-70 x_2^2 x_3-95 x_1^4-47 x_1  x_3^3+
51 x_2^2 x_3^2+47 x_1^5+5 x_1  x_2^4+33 x_3^5.
\]
It is unbounded from below, so it has no global minimizers.
The first and second $H$-minimums $f_1, f_2$ are as follows:
\bcen
\btab{|r|r|c|l|} \hline
$r$ & $f_r$ & $H$-minimizers  & local optimality  \\ \hline
 1  & $-549.9848$ & $( 1.9175, 0.0000,  1.7016 )$ & minimizer \\ \hline
 2  &  $0.0000$ & $( 0.0000, 0.0000,  0.0000)$ &  saddle point\\ \hline
\etab
\ecen
The value $f_1$ is the smallest local minimum, because
$\nabla^2 f \succ 0$ at the $H$-minimizer.
The value $f_2$ is not a local minimum,
because the origin is not a local minimizer.
(When restricted to the line $x_1=x_2=0$, $0$ is not a local minimizer).
The relaxation \reff{<f,y>:kth:mom} is infeasible for $(k,\dt)=(3, 0.1)$,
and $\mathscr{H}^{<}(f_2+0.01)=f_2$. So, $f_2$
is the biggest $H$-minimum.
There is only one local minimum value.
\end{exm}

\section{The hierarchy of local minimums with equality constraints}
\setcounter{equation}{0}

We study how to compute the hierarchy
of local minimums when there are equality constraints.
Consider the problem
\be  \label{minf:h=0}
\min  \quad f(x) \quad s.t. \quad h(x) = 0,
\ee
with $f \in \re[x]$ and $h =(h_1,\ldots, h_m) \in \re[x]^m$.
A point $u$ is a critical point of \reff{minf:h=0} if
there exists $\lmd =(\lmd_1,\ldots, \lmd_m)$,
the vector of Lagrange multipliers, satisfying
\be \label{kkt:f-h:x}
\nabla f(u) \, = \lmd_1 \nabla h_1(u) + \cdots + \lmd_m \nabla h_m(u), \quad
h(u) = 0.
\ee
We call such $(u,\lmd)$ a critical pair and $f(u)$ a critical value.
Clearly, $(u,\lmd)$ is a critical pair if and only if
$(u,\lmd)$ is a critical point of the Lagrangian function
\[
L(x,\lmd) := f(x) - \lmd^T h(x).
\]
So, the problem \reff{minf:h=0} always has finitely many critical values.

Suppose the real variety $V_{\re}(h)$ is smooth, i.e., the gradients
$
\nabla h_1(x), \ldots, \nabla h_m(x)
$
are linearly independent for all $x\in V_{\re}(h)$.
If $u$ is a local minimizer of \reff{minf:h=0},
then there exists $\lmd \in \re^m$
such that $(u,\lmd)$ is a critical pair,
and the second order necessary condition holds:
{\small
\be \label{opcd:sonc}
v^T\left( \nabla^2_x L(u,\lmd) \right) v \geq 0
\qquad \forall \, v \in \bigcap_{i=1}^m \nabla h_i(u)^\perp.
\ee  \noindent}(Denote by 
$a^\perp$ the orthogonal complement to $a$.)
Conversely, if $(u,\lmd)$ is a critical pair and
the second order sufficiency condition holds:
{\small
\be \label{f:h:sosc}
v^T\left( \nabla^2_x L(u,\lmd) \right) v > 0
\qquad \forall \, 0 \ne v \in \bigcap_{i=1}^m \nabla h_i(u)^\perp,
\ee
} \noindent
then $u$ is a strict local minimizer.
In short, for $u$ to be a local minimizer,
\reff{kkt:f-h:x} and \reff{opcd:sonc}
are necessary conditions,
while \reff{kkt:f-h:x} and \reff{f:h:sosc}
are sufficient conditions (cf.~\cite{Brks}).
However, for generic polynomials,
\reff{kkt:f-h:x} and \reff{f:h:sosc}
are sufficient and necessary for $u$ to be a local minimizer
(cf.~\cite{Nie-opcd}).

In this paper, we only consider real critical points
and real critical values.
For convenience, we just call them critical points
and critical values.
We order the critical values of \reff{minf:h=0} monotonically as
\[
c_1 < c_2 < \cdots  < c_N.
\]
The value $c_r$ is called the $r$-th critical value.
Let $c_{\infty} := \max_{1\leq i \leq N} c_i$.
Denote by $\mc{C}(f,h)$ the set of all critical points of \reff{minf:h=0}.
We first compute critical values $c_r$, and then
extract the hierarchy of local minimums from them.

\subsection{The smallest critical value}

Clearly, every critical point $u$ belongs to the determinantal variety
\[
D(f,h) := \left\{ x \in \re^n \mid \rank \bbm \nabla f(x) & \nabla h_1 (x) &
\cdots & \nabla h_m(x) \ebm \leq m \right\}.
\]
When $V_{\re}(h)$ is smooth, every point in $D(f,h) \cap V_{\re}(h)$
is a critical point.
We consider the general case that $m<n$ and $V_{\re}(h)$ is smooth.
(When $m=n$, the feasible set in \reff{minf:h=0} is generically a finite set,
and each feasible point is critical.)
Let $\phi_1,\ldots,\phi_K$ be a minimum set of defining polynomials for $D(f,h)$.
As shown in \cite[Section~2]{Nie-jac},
\[
K =(m+1)(n-m-1)+1
\]
and the polynomials $\phi_j$ can be chosen as
{\small
\be \label{def:phi}
\phi_j = \sum_{ \substack{1 \leq i_1 < \cdots < i_{m+1} \leq n \\
i_1 + \cdots + i_{m+1} = \half (m+1)(m+2)+j-1 }  }
\det ( J_{i_1,\ldots,i_{m+1} } )
\ee \noindent}for
$j=1,\ldots, K$, where $J_{i_1,\ldots,i_{m+1} }$ denotes the
$(m+1)$-by-$(m+1)$ submatrix of
$\bbm \nabla f(x) & \nabla h_1 (x) & \cdots & \nabla h_m(x) \ebm$
with row indices $i_1,\ldots,i_{m+1}$. For convenience, let
\[
\phi := (\phi_1, \ldots, \phi_K).
\]
When $V_{\re}(h)$ is smooth, each critical value $c_i$ of \reff{minf:h=0}
is the objective value of a feasible point of the optimization problem
\be  \label{minf:h=0:det}
\min  \quad f(x) \quad
s.t. \quad  h(x)  = 0, \, \,  \phi(x) = 0,
\ee
and vice versa (cf.~\cite{Nie-jac}).

Apply the hierarchy of semidefinite relaxations to solve \reff{minf:h=0:det}
($k=1,2,\cdots$):
\be  \label{las-r:h=0:det}
\left\{ \baray{rl}
\zeta_k^{(1)} := \min  &  \langle f, y \rangle \\
\mbox{s.t.} & L_{h_i}^{(k)}(y)= 0\, (i\in [m]), \,
   L_{\phi_j}^{(k)}(y) =0\,(j \in [K]), \\
&  \langle 1, y \rangle = 1, \,  M_k(y) \succeq 0.
\earay \right.
\ee
The dual problem of \reff{las-r:h=0:det} is
\be \label{sos-las:h=0:det}
\theta_k^{(1)} := \max  \quad \gamma   \quad
\mbox{s.t.} \quad f - \gamma \in \Sig[x]_{2k} +
\langle h \rangle_{2k} + \langle \phi \rangle_{2k}.
\ee

\begin{thm}  \label{thm:min:c1}
Suppose $V_{\re}(h) \ne \emptyset$ is smooth.
Let $\mc{C}(f,h)$ be the set of critical points of \reff{minf:h=0},
and $c_1$ be the smallest critical value if it exists.

\bit

\item [(i)] The set $\mc{C}(f,h) = \emptyset$
if and only if \reff{las-r:h=0:det} is infeasible for some $k$.

\item [(ii)] If $\mc{C}(f,h) \ne \emptyset$,
then, for all $k$ big enough,
$\theta_k^{(1)} = \zeta_k^{(1)} = c_1$.

\item [(iii)] If $\mc{C}(f, h) \cap \{f(x) = c_1\} \ne \emptyset$ is finite,
then, for all $k$ big enough,
every optimizer $y^*$ of \reff{las-r:h=0:det}
has a truncation $y^*|_{2t}$ that is flat
with respect to $h=0$ and $\phi = 0$.

\eit
\end{thm}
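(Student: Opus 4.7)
The plan is to handle the three items in sequence, closely paralleling Theorem~\ref{thm:f1:R^n} while exploiting the simplifications afforded by the absence of a Hessian constraint and by $c_1$ being the smallest critical value.

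For (i), the ``if'' direction is immediate: any $u \in \mc{C}(f, h)$ gives the Dirac tms $y_\af = u^\af$, which is feasible in \reff{las-r:h=0:det} for every $k$. For the ``only if'' direction, $\mc{C}(f,h) = V_{\re}(h) \cap V_{\re}(\phi) = \emptyset$ triggers the real Nullstellensatz to produce $\tau \in \Sig[x]$ with $-1 - \tau \in \langle h \rangle + \langle \phi \rangle$. Truncating this identity at a fixed degree makes \reff{sos-las:h=0:det} unbounded from above for all $k$ sufficiently large, which by weak duality renders \reff{las-r:h=0:det} infeasible.

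Item (ii) is the main effort. I would mimic the argument for Theorem~\ref{thm:f1:R^n}(iii), replacing $\langle \nabla f \rangle$ by $\langle h \rangle + \langle \phi \rangle$ and omitting the contribution of $\nabla^2 f$ altogether. Applying Lemma~3.2 of \cite{Nie-jac} to $\langle h, \phi \rangle$, one obtains a decomposition
\[
V_{\cpx}(h, \phi) = U_0 \cup U_1 \cup \cdots \cup U_t
\]
with $U_0 \cap \re^n = \emptyset$, $U_i \cap \re^n \ne \emptyset$ and $f \equiv v_i \in \re$ on $U_i$ for $i = 1, \ldots, t$, together with a corresponding primary decomposition $\langle h, \phi \rangle = G_0 \cap G_1 \cap \cdots \cap G_t$. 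After shifting so that $c_1 = 0$, every $v_i \ge 0$ (since $c_1$ is the smallest critical value), with $v_\ell = 0$ for some $\ell$. For each $\eps > 0$ I would construct $\sig_i^\eps$ modulo $G_i$: on $U_0$ via the real Nullstellensatz combined with $f = \tfrac{1}{4}(f+1)^2 - \tfrac{1}{4}(f-1)^2$; on each $U_i$ with $v_i > 0$ as a polynomial truncation of $\sqrt{v_i}\bigl(1 + v_i^{-1}(f - v_i)\bigr)^{1/2}$, which exists modulo $G_i$ by Hilbert's strong Nullstellensatz; and on $U_\ell$ as the square of $\sqrt{\eps}\bigl(1 + \eps^{-1}f\bigr)^{1/2}$ expanded modulo $G_\ell$. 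Gluing via the partition of unity $a_0, \ldots, a_t \in \re[x]$ supplied by Lemma~3.3 of \cite{Nie-jac}, one obtains
\[
f + \eps = \sum_{i=0}^t \sig_i^\eps a_i^2 + \phi_\eps, \qquad \phi_\eps \in \langle h \rangle_{2N^*} + \langle \phi \rangle_{2N^*},
\]
with $N^*$ controlled independently of $\eps$ by a uniform degree count. This certifies $\gamma = c_1 - \eps$ feasible in \reff{sos-las:h=0:det} at level $N^*$, whence $\theta_{N^*}^{(1)} \ge c_1 - \eps$; sending $\eps \to 0^+$ and combining with the trivial bound $\zeta_k^{(1)} \le c_1$ and weak duality forces $\theta_k^{(1)} = \zeta_k^{(1)} = c_1$ for all $k \geq N^*$.

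For (iii), I would use the SOS certificate $f - c_1 = \sig + q$ delivered by (ii), with $\sig = \sum_i p_i^2 \in \Sig[x]_{2k}$ and $q \in \langle h \rangle_{2k} + \langle \phi \rangle_{2k}$. Any optimizer $y^*$ satisfies $\mathscr{L}_{y^*}(f - c_1) = 0$ and hence $\mathscr{L}_{y^*}(\sig) = 0$; since $M_k(y^*) \succeq 0$, each summand $\mathscr{L}_{y^*}(p_i^2) = \vec(p_i)^T M_k(y^*) \vec(p_i)$ vanishes, forcing each $p_i$ to lie in the kernel of $M_k(y^*)$. Together with $L_{h_i}^{(k)}(y^*) = 0$ and $L_{\phi_j}^{(k)}(y^*) = 0$, this pins down any representing measure of $y^*$ to the finite set $\mc{C}(f, h) \cap \{f = c_1\}$. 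Applying Proposition~4.6 (or Remark~4.9) of \cite{LLR08} to this finite variety produces a flat truncation $y^*|_{2t}$ for some $t$ depending only on the cardinality of the optimal level set. The principal obstacle throughout is the degree bookkeeping in (ii): the Nullstellensatz exponents $k_i$, the truncated binomial series, and the partition-of-unity polynomials must all be controlled simultaneously so that $N^*$ can be fixed before letting $\eps \to 0^+$.
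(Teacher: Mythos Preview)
Your argument is essentially correct. The paper, however, dispatches items (ii) and (iii) in two lines by observing that, since \reff{minf:h=0} has only equality constraints, the pair \reff{las-r:h=0:det}--\reff{sos-las:h=0:det} coincides with the Jacobian SDP relaxation hierarchy of \cite{Nie-jac}, and then invokes Theorem~2.3 of \cite{Nie-jac} for (ii) and Corollary~4.3 of \cite{Nie-ft} for (iii). Your detailed primary-decomposition/partition-of-unity construction for (ii) is precisely the machinery underlying the cited result in \cite{Nie-jac}; your observation that all $v_i \geq c_1$ (so the ``$i>\ell$'' branch of Theorem~\ref{thm:f1:R^n} never occurs, eliminating any appeal to compactness or to the Klep--Schweighofer matrix Positivstellensatz) is exactly why the statement carries no archimedean hypothesis. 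So the two routes differ only in whether the argument is unpacked or cited.

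One caution on (iii): Proposition~4.6 of \cite{LLR08} applies when the real variety cut out by the \emph{fixed} equality constraints of the SDP is finite, but here $V_{\re}(h,\phi)=\mc{C}(f,h)$ is not assumed finite---only its intersection with $\{f=c_1\}$ is. Your argument correctly places the SOS multipliers $p_i$ in $\ker M_k(y^*)$, but that does not immediately yield $L_{p_i}^{(k)}(y^*)=0$, so you cannot simply adjoin the $p_i$ as new equality constraints and invoke \cite{LLR08} verbatim. The correct packaging is the one in \cite{Nie-ft}: use the finiteness of the real variety of $\langle h,\phi,p_1,\dots,p_r\rangle$ to bound $\rank\,M_t(y^*)$ uniformly, then deduce rank stabilization and hence a flat truncation. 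This is a refinement of your sketch rather than a different idea, and is exactly what Corollary~4.3 of \cite{Nie-ft} supplies.
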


\begin{rmk} \label{rmk:thm4.1}
For generic $(f,h)$, the set $\mc{C}(f,h)$ is finite (cf.~\cite{NR09}).
A good criterion of checking the convergence of $\theta_k^{(1)}, \zeta_k^{(1)}$
is that $y^*$ has a flat truncation $y^*|_{2t}$.
When \reff{las-r:h=0:det} is solved by primal-dual interior point methods,
we often get all the minimizers of \reff{minf:h=0:det},
which are critical points associated to $c_1$.
We refer to Remarks~\ref{rmk:thm3.1} and \ref{rmk:thm3.3}.
\end{rmk}

\begin{proof}[Proof of Theorem~\ref{thm:min:c1}] \,

(i) Note that \reff{las-r:h=0:det} is a relaxation of \reff{minf:h=0:det}.
If \reff{las-r:h=0:det} is infeasible for some $k$,
then \reff{minf:h=0:det} must be infeasible,
which implies that $\mc{C}(f,h) = \emptyset$.

Conversely, if $\mc{C}(f,h) = \emptyset$,
then \reff{minf:h=0:det} is infeasible, because $V_{\re}(h)$ is smooth.
By Real Nullstellensatz (cf.~\cite[Corollary~4.1.8]{BCR}), we have
\[
-1 \in  \Sig[x] + \langle h \rangle + \langle \phi \rangle.
\]
This implies that for all $k$ big, \reff{sos-las:h=0:det} is unbounded from above
and hence \reff{las-r:h=0:det} is infeasible.

(ii)-(iii) \,
Since $V_{\re}(h)$ is smooth,
every feasible point of \reff{minf:h=0:det} is a critical point,
and its objective value is a critical value.
The minimum value of \reff{minf:h=0:det} is the smallest
critical value $c_1$ of \reff{minf:h=0}.
Since \reff{minf:h=0} has no inequality constraints,
the relaxations \reff{las-r:h=0:det} and \reff{sos-las:h=0:det}
is equivalent to the Jacobian SDP relaxations
(2.8) and (2.11) in \cite{Nie-jac}.
Therefore, the item (ii) can be implied by Theorem 2.3 of \cite{Nie-jac},
and the item (iii) can be implied by Corollary 4.3 of \cite{Nie-ft}.
\qed
\end{proof}

\subsection{Bigger critical values}
\label{subsec:cr+1}

Suppose the $r$-th critical value $c_r$ of \reff{minf:h=0} is known.
We want to compute the next bigger one $c_{r+1}$,
if it exists. For $\dt >0$, consider the optimization problem
\be  \label{mf:h=0:f>ck}
\left\{ \baray{rl}
\mathscr{C}^{>}(c_r+\dt):= \underset{x\in \re^n}{\min} & f(x) \\
s.t. & h(x)  = 0,\,  \phi(x) = 0, \, f(x) \geq c_r + \dt.
\earay \right.
\ee
Clearly, $\mathscr{C}^{>}(c_r+\dt)$ is the smallest critical value $\geq c_r+\dt$.
We apply the hierarchy of semidefinite relaxations to solve \reff{mf:h=0:f>ck}
($k=1,2,\cdots$):
\be  \label{las-r:h=0:f>ck}
\left\{ \baray{rl}
\zeta_k^{(r+1)} :=\min  &  \langle f, y \rangle \\
s.t. & L_{h_i}^{(k)}(y)= 0\, (i\in [m]), \,
   L_{\phi_j}^{(k)}(y) = 0\, (j \in [K]), \\
& \langle 1, y \rangle = 1, \, \, M_k(y) \succeq 0, \, L_{f-c_r-\dt}^{(k)}(y) \succeq 0.
\earay \right.
\ee
The dual problem of \reff{las-r:h=0:f>ck} is
\be \label{sos-r:h+Q(f>ck)}
\theta_k^{(r+1)} := \max  \quad \gamma   \quad
s.t. \quad f - \gamma \in \langle h \rangle_{2k}
+ \langle \phi \rangle_{2k} + Q_k(f-c_r-\dt).
\ee
The properties of \reff{las-r:h=0:f>ck} and \reff{sos-r:h+Q(f>ck)}
are summarized as follows.

\begin{thm}  \label{thm:rth:cr}
Suppose $V_{\re}(h) \ne \emptyset$ is smooth.
Let $\dt > 0$, $\mc{C}(f,h)$ be the set of critical points of \reff{minf:h=0}
and $c_r$ be the $r$-th smallest critical value of \reff{minf:h=0}
if it exists.

\bit

\item [(i)] The problem \reff{mf:h=0:f>ck} is infeasible
(i.e., $c_{\infty} < c_r + \dt$) if and only if
\reff{las-r:h=0:f>ck} is infeasible
for some $k$.

\item [(ii)] If $c_r + \dt \leq c_{\infty}$,
then, for all $k$ big enough,
\[
\theta_k^{(r+1)} = \zeta_k^{(r+1)} = \mathscr{C}^{>}(c_r+\dt).
\]

\item [(iii)] If $\mc{C}(f,h) \cap \{ f(x) \geq c_k + \dt \}$ is finite
and $c_r + \dt \leq c_{\infty}$, then for all $k$ big enough,
every optimizer $y^*$ of \reff{<f,y>:kth:mom}
has a truncation $y^*|_{2t}$ that is flat with respect to
$h=0, \phi=0$ and $f - c_r - \dt \geq 0$.

\eit

\end{thm}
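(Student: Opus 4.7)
The plan is to prove items (i)--(iii) in parallel with the corresponding items of Theorem~\ref{thm:Hoc:fr}, substituting the sum of ideals $\langle h \rangle + \langle \phi \rangle$ for $\langle \nabla f \rangle$ and retaining the single quadratic module $Q(f - c_r - \dt)$ (no $Q(\nabla^2 f)$ term). For (i), the ``if'' direction is trivial because \reff{las-r:h=0:f>ck} relaxes \reff{mf:h=0:f>ck}. For the converse, infeasibility of \reff{mf:h=0:f>ck} means $V_{\re}(h) \cap V_{\re}(\phi) \cap \{f \geq c_r + \dt\} = \emptyset$; the Krivine--Stengle Positivstellensatz then produces the certificate $-1 \in \langle h \rangle + \langle \phi \rangle + Q(f - c_r - \dt)$ (with a single inequality, the preordering coincides with the quadratic module). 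Truncating this identity at level $2k$ makes \reff{sos-r:h+Q(f>ck)} unbounded from above, and weak duality forces \reff{las-r:h=0:f>ck} to be infeasible.

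For (ii), shift $f$ by a constant so that $\mathscr{C}^{>}(c_r+\dt) = 0$. Applying Lemma~2 of \cite{NDS} and the associated primary decomposition from \cite[Ch.~5]{Stu02}, write
\[
V_{\cpx}(h) \cap V_{\cpx}(\phi) \;=\; U_0 \,\cup\, U_1 \,\cup\, \cdots \,\cup\, U_t, \qquad
\langle h \rangle + \langle \phi \rangle \;=\; G_0 \,\cap\, G_1 \,\cap\, \cdots \,\cap\, G_t,
\]
with $U_0 \cap \re^n = \emptyset$, $U_i \cap \re^n \ne \emptyset$ for $i \geq 1$, and $f \equiv v_i$ on $U_i$; let $\ell$ be the index with $v_\ell = 0$. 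It suffices to exhibit a uniform degree bound $N^*$ such that, for every $\eps > 0$,
\[
f + \eps \;=\; \phi_\eps + \sig_\eps, \quad
\phi_\eps \in \langle h \rangle_{2N^*} + \langle \phi \rangle_{2N^*}, \quad
\sig_\eps \in Q_{N^*}(f - c_r - \dt).
\]

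The construction of $\sig_\eps$ is component-by-component, with four cases. Cases (a) $U_0$ (empty real part, use Real Nullstellensatz), (b) $v_i > 0$ (truncated $\sqrt{v_i}(1 + v_i^{-1}(f - v_i))^{1/2}$ expansion plus Hilbert's Strong Nullstellensatz), and (c) $i = \ell$, $v_\ell = 0$ (truncated $\sqrt{\eps}(1 + \eps^{-1} f)^{1/2}$ expansion) are carried out exactly as in the proof of Theorem~\ref{thm:Hoc:fr}(iii); in each of (a)--(c) the resulting $\sig_i^\eps$ lies in $\Sig[x] \subseteq Q(f-c_r-\dt)$ and $q_i^\eps := f + \eps - \sig_i^\eps$ has degree independent of $\eps$. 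The new case (d), $v_i < 0$, uses the fact that by the definition of $\mathscr{C}^{>}(c_r+\dt)$ one has $v_i < c_r + \dt$, whence $V_{\re}(G_i) \cap \{f - c_r - \dt \geq 0\} = \emptyset$; Krivine's Positivstellensatz supplies $\tau_i \in Q(f - c_r - \dt)$ with $1 + \tau_i \in G_i$. Setting $\sig_i^\eps := \eps + \tfrac14(f+1)^2 + \tfrac14 \tau_i (f-1)^2$ yields $\sig_i^\eps \in Q(f - c_r - \dt)$ and $f + \eps - \sig_i^\eps \in G_i$, with $\eps$-independent degree. Gluing via the partition of unity $a_0, \ldots, a_t$ produced by Lemma~3.3 of \cite{Nie-jac} (so that $\sum_i a_i^2 - 1 \in \langle h \rangle + \langle \phi \rangle$ and $a_i \in \bigcap_{j \ne i} G_j$) and assembling $\sig_\eps := \sum_i \sig_i^\eps a_i^2$ gives the required representation by the same telescoping as in Theorem~\ref{thm:Hoc:fr}(iii); weak duality plus a tms supported on an optimizer of \reff{mf:h=0:f>ck} pins $\zeta_k^{(r+1)}$ and $\theta_k^{(r+1)}$ to $\mathscr{C}^{>}(c_r+\dt)$.

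Item (iii) follows exactly as in Theorem~\ref{thm:f1:R^n}(iv) and Theorem~\ref{thm:Hoc:fr}(iv): finiteness of $V_{\re}(h) \cap V_{\re}(\phi) \cap \{f \geq c_r+\dt\}$ together with Remark~4.9 of \cite{LLR08} yields a uniform $t$ such that every feasible $y$ of \reff{las-r:h=0:f>ck} has a flat truncation $y|_{2t}$ with respect to $h = 0$ and $\phi = 0$; since $L^{(k)}_{f - c_r - \dt}(y) \succeq 0$ and $\deg(f - c_r - \dt)$ is strictly below $2t$, the truncation $y|_{2t}$ is automatically flat with respect to $f - c_r - \dt \geq 0$ as well. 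The main obstacle is case (d) of (ii): because $V_{\re}(h)$ may be unbounded, $G_i$ need not be archimedean, so Putinar's theorem is unavailable and one must invoke the abstract Krivine--Stengle Positivstellensatz; the payoff is that the $\tau_i$ it produces depends only on $G_i$ and $f - c_r - \dt$ (not on $\eps$), which is precisely what allows $N^*$ to be chosen uniformly in $\eps$ and the whole argument to close.
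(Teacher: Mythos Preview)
Your proof is correct and follows the same overall strategy as the paper. Two differences are worth noting.

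For (ii), the paper invokes Lemma~3.2 of \cite{Nie-jac} to decompose $V_{\cpx}(h,\phi)$ \emph{relative to the set} $T=\{x:f(x)\geq c_r+\dt\}$ rather than relative to $\re^n$: the exceptional piece satisfies $W_0\cap T=\emptyset$ (not merely $W_0\cap\re^n=\emptyset$), and the remaining pieces all have $W_i\cap T\ne\emptyset$, hence $v_i\geq c_r+\dt$ and thus $v_i\geq\mathscr{C}^{>}(c_r+\dt)=0$ after the shift. This absorbs every component with $v_i<0$ into $W_0$, where a single application of the Positivstellensatz disposes of it; your case (d) never arises. Your separate treatment of (d) is correct but is exactly the duplicate of the $i=0$ argument that the paper's sharper decomposition avoids.

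For (iii), the paper does not go through \cite{LLR08} at all; it simply cites Theorem~2.6 of \cite{Nie-ft}, which says that finite convergence of the SOS hierarchy (established in (ii)) together with finiteness of the optimizer set of \reff{mf:h=0:f>ck} forces flat truncation of every moment optimizer. Your route via Remark~4.9 of \cite{LLR08} mirrors the proof of Theorem~\ref{thm:Hoc:fr}(iv) and is also valid, but the last sentence about $\deg(f-c_r-\dt)$ being ``strictly below $2t$'' is not quite the right justification; what you actually need is that the flat-extension rank condition already matches at the correct truncation level, which Remark~4.9 (or Theorem~2.6 of \cite{Nie-ft}) delivers directly.
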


\begin{rmk} \label{rmk:thm4.3}
The set $\mc{C}(f,h)$ is finite for generic $(f,h)$.
When \reff{las-r:h=0:f>ck} is solved by primal-dual interior point methods,
we often get all the critical points on which $f \geq c_r + \dt$.
We refer to Remark~\ref{rmk:thm4.1}.
\end{rmk}

\begin{proof}[Proof of Theorem~\ref{thm:rth:cr}]\,

(i) If \reff{las-r:h=0:f>ck} is infeasible for some $k$,
then \reff{mf:h=0:f>ck} must be infeasible,
because \reff{las-r:h=0:f>ck} is a relaxation of \reff{mf:h=0:f>ck}.

Conversely, if \reff{mf:h=0:f>ck} is infeasible,
then $c_r + \dt > c_\infty$ and
the feasible set of \reff{mf:h=0:f>ck} is empty.
By Positivstellensatz (cf.~\cite[Corollary~4.4.3]{BCR}),
\[
-1 \in \langle   h \rangle + \langle \phi \rangle + Q(f-c_r-\dt).
\]
%
%
This implies that \reff{sos-r:h+Q(f>ck)} is unbounded from above
for all big $k>0$, which then implies that
\reff{las-r:h=0:f>ck} is infeasible, by weak duality.

(ii) By weak duality, it holds that for all $k$
\[
\theta^{(r+1)}_k \leq \zeta^{(r+1)}_k \leq
\mathscr{C}^{>}(c_r+\dt).
\]
It is enough to show that
there exists $N^*>0$ such that, for all $\eps>0$,
\be \label{f:cr+1:ufmbd}
f - \left( \mathscr{C}^{>}(c_r+\dt) - \eps \right) \in \langle h \rangle_{2N^*}
+ \langle \phi \rangle_{2N^*}
+ Q_{N^*}( f - c_r - \dt),
\ee
because \reff{f:cr+1:ufmbd} implies that
$\theta^{(r+1)}_k  = \mathscr{C}^{>}(c_r+\dt)$ for all $k\geq N^*$.

\medskip
Let $T=\{ x\in \re^n \mid f(x) \geq c_r + \dt\}$.
and $W = V_{\cpx}(h,\phi)$. By Lemma~3.2 of \cite{Nie-jac},
we can decompose $W$ into the union of disjoint complex varieties
\be \label{dcmp:W}
W = W_0 \cup W_1 \cup \cdots \cup W_t,
\ee
such that $W_0 \cap T = \emptyset$,
$W_i \cap T \ne \emptyset$ and $f \equiv v_i \in \re$ on $W_i$
for $i = 1, \ldots, t$. Order them as
$v_1 > v_2 > \cdots > v_t$.
Then, $\mathscr{C}^{>}(c_r + \dt)= v_t$. Up to shifting $f$ by a constant,
we can further assume that $\mathscr{C}^{>}(c_r + \dt)=v_t=0$.
Corresponding to \reff{dcmp:W}, the ideal
$\langle h \rangle + \langle \phi \rangle$
has a primary decomposition (cf.~\cite[Chapter~5]{Stu02})
\[
\langle h \rangle + \langle \phi \rangle =
E_0 \,\cap \, E_1 \,\cap \,\cdots \,\cap \, E_t
\]
such that each $E_i\subseteq \re[x]$ is an ideal and $W_i = V_{\cpx}(E_i)$.

\smallskip
For $i=0$, $V_{\re}(E_0) \cap T = \emptyset$.
%
%
By Positivstellensatz \cite[Corollary~4.4.3]{BCR},
there exists $\tau \in Q(f-c_r-\dt)$ such that
$1 + \tau  \in E_0$. From
$f = \frac{1}{4} (f+1)^2 - \frac{1}{4} (f-1)^2$, we get
\[
f  \equiv \frac{1}{4} \left\{
(f+1)^2 + \tau (f-1)^2   \right\}
\quad \mod \quad E_0.
\]
Let
$ \sig_0^\eps = \eps + \frac{1}{4} \left\{ (f+1)^2 + \tau (f-1)^2   \right\}$.
For any $N_0 > \deg(f\tau)$, we have
\[ \sig_0^\eps \in Q_{N_0}(f-c_r-\dt) \]
for all $\eps>0$.
Let $q_0^\eps := f+\eps - \sig_0^\eps \, \in \, E_0$.
Since $\eps$ is canceled in the substraction $f+\eps - \sig_0^\eps$,
$q_0^\eps$ is independent of $\eps$.

\smallskip
For $i=1,\ldots, t-1$, we have $v_i>0$
and $v_i^{-1} f - 1 \equiv 0$ on $W_i = V_{\cpx}(E_i)$.
By Hilbert's Strong Nullstenllensatz \cite{CLO97},
$( v_i^{-1} f - 1)^{k_i} \in E_i $ for some $k_i > 0$. Let
\[
\baray{rl}
s_i := & \sqrt{v_i} \left(1 \, + \, \big( v_i^{-1} f   - 1 \big)\right)^{1/2} \\
    \equiv & \,  \sqrt{v_i}  \sum_{j=0}^{k_i-1} \binom{1/2}{j}
\left( v_i^{-1} f  - 1 \right)^j \quad \mod \quad E_i,
\earay
\]
and $\sig_i^\eps :=  s_i^2 + \eps$.
Let $q_i^\eps := f + \eps - \sig_i^\eps$.
The subtraction $f + \eps - \sig_i^\eps$ cancels $\eps$,
so $q_i^\eps$ is independent of $\eps$.

\smallskip
When $i=t$, $f \equiv 0$ on $W_t =V_{\cpx}(E_t)$.
By Hilbert's Strong Nullstenllensatz,
$f^{k_t} \in E_t$ for some $k_t > 0$. So, we get
{\small
\[
s_t^\eps := \sqrt{\eps} \left(1 + f/\eps \right)^{1/2}
 \equiv \sqrt{\eps}  \sum_{j=0}^{k_t-1} \binom{1/2}{j} \eps^{-j} f^j
\quad \mod \quad E_t .
\]
} \noindent
Let $\sig_t^\eps := (s_r^\eps)^2$ and
$q_t^\eps := f + \eps - \sig_t^\eps  \, \in \,  E_t$.
The coefficients of $q_t$ depend on $\eps$, but its degree does not.
This is because
\[
q_t^\eps \, = \,  c_0(\eps) f^{k_t} + \cdots +  c_{k_t-2}(\eps) f^{2k_t-2}
\]
for real numbers  $c_j(\eps)$ and
$f^{k_t+j}  \in E_t$ for all $j \in \N$.

\smallskip
The complex varieties $E_0,E_1,\ldots,E_t$ are disjoint from each other.
By Lemma~3.3 of \cite{Nie-jac},
there exist $a_0, \ldots, a_t \in \re[x]$ such that
{\small
\[
a_0^2+\cdots+a_t^2-1 \, \in \, \langle h \rangle + \langle \phi \rangle,
\quad a_i \in \bigcap_{ j \ne i } E_j.
\]
} \noindent
Let $\sig_\eps = \sig_0^\eps a_0^2+\sig_1^\eps a_1^2 + \cdots + \sig_t^\eps a_t^2$,
then
\[
 f + \eps  - \sig_\eps
 = \sum_{i=0}^t (f+\eps-\sig_i^\eps) a_i^2 + (f+\eps)(1-a_0^2-\cdots-a_r^2).
\]
By repeating the same argument as in end of
the proof of Theorem~\ref{thm:f1:R^n}(iii),
we can show that if $N^*$ is big enough,
then, for all $\eps>0$,
\[
f + \eps  \in \langle h \rangle_{2N^*} +
\langle \phi \rangle_{2N^*} + Q_{N^*}(f-c_r-\dt).
\]
So, \reff{f:cr+1:ufmbd} is proved,
and hence the item (ii) is true.

(iii) This can be implied by Theorem~2.6 of \cite{Nie-ft},
because the hierarchy of \reff{sos-r:h+Q(f>ck)}
has finite convergence and \reff{mf:h=0:f>ck} has finitely many minimizers.
\qed
\end{proof}

Clearly, if $0 < \dt < c_{r+1}- c_r$,
then $\mathscr{C}^{<}(c_r+\dt) = c_{r+1}$.
To check whether $\dt < c_{r+1} - c_r$ or not,
we consider the maximization problem
\be  \label{max:f:h=0:f>ck}
\left\{ \baray{rl}
\mathscr{C}^{<}(c_r+\dt):= \max & f(x) \\
\mbox{s.t.} & h(x)  = 0, \,  \phi(x) = 0, \,  c_r + \dt - f(x) \geq 0.
\earay \right.
\ee
The following lemma is obvious.

\begin{lem}
For $\dt>0$,
$\mathscr{C}^{<}(c_r + \dt) = c_r$
if and only if $\dt < c_{r+1} - c_r$.
\end{lem}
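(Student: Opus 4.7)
The plan is to reduce the statement to a simple lookup in the finite ordered list $c_1<c_2<\cdots<c_N$ of critical values by identifying what the feasible set of \reff{max:f:h=0:f>ck} really is. Since $V_{\re}(h)$ is assumed smooth, the results recalled in Section~4.1 say that every $x$ with $h(x)=0$ and $\phi(x)=0$ is a critical point of \reff{minf:h=0}; hence for any feasible $x$ of \reff{max:f:h=0:f>ck} one has $f(x)\in\{c_1,\ldots,c_N\}$. Conversely, for every $c_i$ there is at least one critical point realizing it. Therefore
\[
\mathscr{C}^{<}(c_r+\dt) \;=\; \max\bigl\{c_i : 1\le i\le N,\ c_i\le c_r+\dt\bigr\},
\]
and this maximum always contains $c_r$ itself (since $c_r\le c_r+\dt$ and $c_r$ is realized at a critical point associated with the $r$-th value).

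From this identification both implications become tautological. For the ``if'' direction, assume $\dt<c_{r+1}-c_r$, so $c_r+\dt<c_{r+1}$. No $c_j$ with $j\ge r+1$ lies in $(-\infty,c_r+\dt]$, while $c_r$ does; hence the maximum above equals $c_r$. For the ``only if'' direction, contrapose: if $\dt\ge c_{r+1}-c_r$, then $c_{r+1}\le c_r+\dt$, so $c_{r+1}$ is feasible in \reff{max:f:h=0:f>ck}, giving $\mathscr{C}^{<}(c_r+\dt)\ge c_{r+1}>c_r$.

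There is essentially no obstacle; the only subtle point is justifying the identification of the feasible set of \reff{max:f:h=0:f>ck} with (a subset of) $\mc{C}(f,h)$, which is exactly what was established for \reff{minf:h=0:det} in the discussion preceding Theorem~\ref{thm:min:c1}, together with the trivial observation that $c_r$ itself is attained, so the supremum is a maximum. I would present the proof in two short paragraphs: one establishing the above formula for $\mathscr{C}^{<}(c_r+\dt)$, and one deriving both directions from it.
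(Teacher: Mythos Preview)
Your argument is correct and is precisely the ``obvious'' reasoning the paper has in mind; the paper gives no proof beyond the sentence ``The following lemma is obvious.'' Your key identification $\mathscr{C}^{<}(c_r+\dt)=\max\{c_i:c_i\le c_r+\dt\}$ (valid under the standing smoothness assumption on $V_{\re}(h)$) is exactly what makes both directions immediate, and your two-direction argument from it is clean.
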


The optimal value $\mathscr{C}^{<}(c_r+\dt)$ can also be computed by
solving a hierarchy of semidefinite relaxations
that are similar to \reff{las-r:h=0:f>ck}.
Similar properties like in Theorem~\ref{thm:rth:cr} hold.
For cleanness, we omit them here.
Once $c_r$ is known, $c_{r+1}$ can be determined
by the following procedure:
\bit

\item [0.] Choose a small positive value of $\dt$ (e.g., $0.01$).

\item [1.] Compute the optimal value $\mathscr{C}^{<}(c_r+\dt)$ of \reff{max:f:h=0:f>ck}.

\item [2.] Solve the hierarchy of semidefinite relaxations \reff{las-r:h=0:f>ck}.

\bit

\item
If \reff{las-r:h=0:f>ck} is infeasible for some $k$
and $\mathscr{C}^{<}(c_r+\dt) = c_r$,
then $c_r = c_{\infty}$ and stop.

\item
If \reff{las-r:h=0:f>ck} is infeasible for some $k$
but $\mathscr{C}^{<}(c_r+\dt) > c_r$,
then decrease the value of $\dt$ (e.g., $\dt :=\dt/2$)
and go to Step 1.

\item
If \reff{las-r:h=0:f>ck} is feasible for all $k$,
then we generally get $\mathscr{C}^{>}(c_r+\dt) = \zeta_k^{(r+1)}$ when $k$ is big.
If  $\mathscr{C}^{<}(c_r+\dt) = c_r$,
then $c_{r+1} = \mathscr{C}^{>}(c_r+\dt)$ and stop;
otherwise, decrease the value of $\dt$ (e.g., $\dt :=\dt/2$) and go to Step 1.

\eit

\eit

After $c_{r+1}$ is obtained, we can use the same procedure
to determine $c_{r+2}$. By repeating this process,
all critical values can be obtained.
Typically, it is hard to estimate the gap $c_{r+1}-c_r$.
There are numerical troubles when $c_{r+1}-c_r$ is very small.
We refer to the discussion at the end of \S3.2.

\subsection{Extracting local minimums}

Suppose all critical values $c_1, \ldots, c_N$ are computed.
We want to check whether they are local minimums or not.
By Remarks~\ref{rmk:thm4.1} and \ref{rmk:thm4.3},
we often get all critical points associated to each $c_r$.

Let $u$ be a critical point on which $f(u) = c_r$.
Clearly, if $u$ satisfies the second order sufficiency condition \reff{f:h:sosc},
then $u$ is a strict local minimizer.
Similarly, if $u$ violates the second order necessary condition \reff{opcd:sonc},
then $u$ is not a local minimizer.
For generic cases, \reff{f:h:sosc} is sufficient
and necessary for $u$ to be a local minimizer (cf.~\cite{Nie-opcd}).
The resting, but also difficult, case is that
$u$ satisfies \reff{opcd:sonc} but not \reff{f:h:sosc}.
Note that $u$ is a local minimizer of \reff{minf:h=0}
if and only if $f(u)$ is the optimal value of
\be \label{mf:h=0:|x-u|<rho}
\min \quad f(x) \quad s.t. \quad
h(x) = 0, \,\rho^2 - \| x - u \|^2 \geq 0
\ee
for a small $\rho>0$.
The Jacobian SDP relaxation method in \cite{Nie-jac}
can be applied to solve \reff{mf:h=0:|x-u|<rho}.
Therefore, the local optimality of $u$
can be verified by solving \reff{mf:h=0:|x-u|<rho}
for a small $\rho >0$.
For cleanness, we omit the details here.

Note that a critical point $u$ is a local maximizer of \reff{minf:h=0}
if and only if $u$ is a local minimizer of $-f$ over $h=0$.
So, the local maximality can also be checked
by similar conditions like \reff{opcd:sonc} and \reff{f:h:sosc}.
If $u$ is neither a local minimizer nor a local maximizer,
then $u$ is a saddle point.

\subsection{Examples}

The semidefinite relaxations \reff{las-r:h=0:det}, \reff{las-r:h=0:f>ck}
can be solved by software
{\tt GloptiPoly~3}~\cite{GloPol3} and {\tt YALMIP}~\cite{YALMIP},
which uses the SDP solver {\tt SeDuMi}~\cite{sedumi}.
We first apply the procedure at the end of \S \ref{subsec:cr+1}
to get all critical values, then decide their local optimality.

Recall that $c_1 < \cdots < c_N$ are the hierarchy of
critical values of the optimization problem \reff{minf:h=0},
defined at the beginning of \S4.
The optimal value $\mathscr{C}^{>}(c_r+\dt)$ is
defined in \reff{mf:h=0:f>ck}, and  $\mathscr{C}^{<}(c_r+\dt)$ is
defined in \reff{max:f:h=0:f>ck}.
Such notation will be used in the following examples.

\begin{exm}
Consider the polynomials
\[
f = x_1^4x_2^2 + x_1^2x_2^4 + x_3^6 - 3 x_1^2 x_2^2 x_3^2, \quad
h = x_1^2+x_2^2+x_3^2 - 1.
\]
The objective is the Motzkin polynomial.
The 1st through $4$th critical values $c_1,\ldots,c_4$
on the unit sphere are computed as follows:
\bcen
\btab{|r|r|c|l|} \hline
$r$ & $c_r$ & critical points &  local optimality \\ \hline
1 & $0.0000$    & $( \pm 0.5774,  \pm 0.5774,   \pm 0.5774)$   &  minimizer \\
  &          & $( \pm 1.0000, 0.0000,  0.0000)$   &  minimizer \\
  &        & $(0.0000,  \pm 1.0000,  0.5774)$   &  minimizer \\ \hline
2 & $0.0156$    & $( \pm 0.2623,  \pm 0.8253,  0.5000)$   &  saddle point  \\
  &       & $( \pm 0.8253,  \pm 0.2623,  0.5000)$   &  saddle point  \\  \hline
3 & $0.2500$   & $( \pm 0.7071,  \pm 0.7071,  0.0000)$   &  maximizer     \\ \hline
4 & $1.0000$   & $(0.0000, 0.0000,  \pm 1.0000)$   &  maximizer  \\ \hline
\etab
\ecen
Because \reff{las-r:h=0:f>ck} is infeasible
for $(k,\dt) = (3,0.01)$
and $\mathscr{C}^{<}(c_4+0.01) = 1$,
we know $c_4$ is the biggest critical value.
There are four critical values,
with one local minimum value
and two local maximum values.
\end{exm}

\begin{exm}
Consider the polynomials given as
\[
\baray{c}
f = (2x_1^4-3x_2^2+4x_3^4)^2-5(x_1x_2-x_2x_3+x_3x_1)^4 +x_1^7x_2+x_2^7x_3+x_3^7x_1, \\
h = x_1^4+x_2^4+x_3^4 -1.
\earay
\]
The 1st through $9$th critical values
$c_1,\ldots,c_9$ are computed as follows:
\bcen
\btab{|r|r|c|l|} \hline
$r$ & $c_r$ & critical points &  local optimality \\ \hline
1 & $-45.0451$ &  $\pm( -0.7599,  0.7624, 0.7572)$  & minimizer \\ \hline
2 & $-1.5552 $ &  $\pm(0.3134, 0.8435,  -0.8341)$  &  minimizer \\  \hline
3 & $-1.1143$  &  $\pm(0.9070, 0.7137, 0.5027)$   &  minimizer \\  \hline
4 & $-0.3788$  &  $\pm(0.8754,  -0.7359,  0.5879)$ &  saddle point \\ \hline
5 & $-0.3650$  &  $\pm(0.8900,  0.7696, -0.3840)$  &  saddle point \\  \hline
6 & $ 0.3554$  &  $\pm(0.4837,  0.8234,  0.8347)$  &  saddle point \\ \hline
7 & $ 4.0191$  &  $\pm(0.9998, 0.0363, 0.1666)$  &  saddle point \\ \hline
8 & $ 9.1456$  &  $\pm(0.0713,  0.9996,  0.1941)$  &  maximizer \\ \hline
9 & $16.1706$  &  $\pm(0.2273, 0.0036, 0.9993)$   &  maximizer \\ \hline
\etab
\ecen
Because \reff{las-r:h=0:f>ck} is infeasible for $(k,\dt) = (5,0.01)$
and $\mathscr{C}^{<}(c_{9}+0.01) = c_{9}$,
we know $c_{9}$ is the biggest critical value.
There are nine critical values,
three of which are local minimum values
and two of which are local maximum values.
\end{exm}

\begin{exm}
Consider the polynomials given as:
\[
\baray{c}
f = x_1^5+x_2^5+x_3^5 + (x_1x_2-x_2x_3-x_3x_1)^2 + (x_1+x_2-x_3)^3, \\
h_1 = 6x_1^4 - 2x_2^4 - 3x_3^4 - 1, \quad
h_2 = 4x_1^2+ 5x_2^2 - 7x_3^2 - 2.
\earay
\]
The 1st through $7$th critical values
$c_1,\ldots,c_7$ are computed as follows:
\bcen
\btab{|r|r|c|l|}  \hline
$r$ & $c_r$ & critical points &  local optimality \\ \hline
1 & $-97.9193$  & $(-2.3943, -2.3119,  2.6092)$ & minimizer \\  \hline
2 &  $-0.6117$  & $( -0.6494,  -0.4021,  -0.2660)$ & maximizer  \\ \hline
3 &  $-0.2008$ & $(-0.6493, 0.4011, 0.2648 )$ &   minimizer   \\ \hline
4 &  $0.1712$ & $( 0.6413, -0.2921,   0.1012)$ &  minimizer \\ \hline
5 &  $0.6121$  & $( 0.6486,  0.3952,  0.2573)$  &  minimizer  \\ \hline
6 &  $1.0710$  & $( 1.1049, -1.1056,  -1.1336)$  &  maximizer \\ \hline
7 &  $1.0843$  &  $(-1.0948, 1.0956, -1.1210 )$ &  maximizer \\ \hline
\etab
\ecen
Since \reff{las-r:h=0:f>ck} is infeasible
for $(k, \dt)= (5, 0.01)$
and $\mathscr{C}^{<}(c_{7}+0.01) = c_{7}$,
we know $ c_7$ is the biggest critical value.
There are seven critical values.
Four of them are local minimum values,
and the resting three are local maximizers.
\end{exm}

\begin{exm}
Consider the polynomials given as
\[
\baray{c}
f = (2x_1^2-x_1x_2-3x_1x_3)y_1^2 +(3x_2^2+2x_2x_1-5x_2x_3)y_1y_2
+(4x_3^2+x_3x_1+3x_3x_2)y_2^2, \\
h_1 =x_1^2+x_2^2+x_3^2-1, \quad h_2 = y_1^2+y_2^2 -1.
\earay
\]
The 1st through $13$th critical values
$c_1,\ldots, c_{13}$ are computed as follows:
\bcen
\btab{|r|r|c|l|} \hline
$r$ & $c_r$ & critical points &  local optimality \\ \hline
1 & $-2.4943$  &  $(0.1503,  0.9095,  -0.3875,  -0.6591,  0.7521 )$  & minimizer   \\  \hline
2 & $-0.8949$  &  $(0.4719,  0.3089,   0.8258,  0.9973,  0.0728 )$  &  minimizer   \\  \hline
3 & $-0.7232$  &  $(0.4199,  -0.2395,   0.8754,   0.9897,  -0.1430 )$ & saddle point \\  \hline
4 & $-0.6003$  &  $(0.4452,   0.7828,   0.4347,   0.9871,  -0.1601)$ & saddle point \\  \hline
5 & $-0.0095$  &  $(0.9254,  -0.3788,   0.0133,   0.06015,   0.9982)$ & saddle point \\  \hline
6 & $0.7898$  &  $(0.9297,  -0.0693,   0.3617,   0.7088,   0.7054)$  & saddle point \\  \hline
7 & $1.1474$  & $(-0.4288,   0.83738,   0.3391,   0.7427,   0.6697)$  & saddle point \\  \hline
8 & $1.3137$  & $(-0.5357,   0.8397,  -0.0893,   0.8780,   0.4786 )$ & saddle point \\  \hline
9 & $1.4812$  & $(0.314,  0.9344,   0.1678,   0.5724,   0.8200)$  & saddle point \\  \hline
10& $2.4943$  & $( 0.6172,   0.4448,  -0.6490,  0.8488,   0.5287 )$  & saddle point \\  \hline
11& $2.8665$  &  $( -0.7019,   0.3469,   0.6221,   0.8785,  -0.4777 )$  & saddle point \\  \hline
12& $2.9211$  & $(0.8469, -0.2327,  -0.4781,   0.9825,  -0.1863 )$   & maximizer \\  \hline
13& $4.6163$  & $( 0.0874,  0.3330,   0.9389,  -0.1193,  0.9929 )$  & maximizer \\  \hline
\etab
\ecen
We know that $c_{13}$ is the biggest critical value,
because \reff{las-r:h=0:f>ck} is infeasible
for $(k,\dt) = (4, 0.01)$
and $\mathscr{C}^{<}(c_{13}+0.01) = c_{13}$.
There are thirteen critical values,
with two local minimum values
and two local maximum values.
\end{exm}

\section{Some extensions and discussions}

\subsection{Critical values in $\re^n$}

The approach in \S4 can be applied to get
all critical values of a polynomial in the space $\re^n$.
When there are no constraints,
the polynomials $\phi_j$ in \reff{def:phi}
are just the partial derivatives of $f$.
Thus, if the $r$-th critical value $c_r$ is computed,
the next bigger one $c_{r+1}$ is the optimal value of
\[
\min \quad f(x) \quad s.t. \quad \nabla f(x) = 0, \quad
f(x)  \geq c_r + \dt,
\]
for some $\dt>0$ sufficiently small.
A similar version of the relaxation \reff{las-r:h=0:f>ck}
can be applied to compute $c_{r+1}$.
The properties in Theorem~\ref{thm:rth:cr} also hold.

\subsection{Local minimums in an open set}

Suppose we want to compute
the hiearchy of local minimums of a polynomial $f$
in an open semialgebraic set of the form
\[
\Omega = \{ x \in \re^n:\, g_1(x) >0, \ldots, g_m(x)>0\}.
\]
The necessary local optimality conditions in an open set
are still $\nabla f(x) = 0$, $\nabla^2 f(x) \succeq 0$.
So, the $H$-minimums are feasible objective values of the problem
\[
\min \quad f(x) \quad s.t. \quad
\nabla f(x) =0, \, \nabla^2 f(x) \succeq 0,  \,
g_i(x) \geq 0 \,(i\in [m]).
\]
The method in Section~3 can be applied to get $H$-minimums.
Once they are obtained, we then check whether they are local minimums.

\subsection{Critical values in a closed set}

We consider to compute the critical values of a polynomial $f$
on a basic closed semialgebraic set.
For convenience, consider the closed set
\[
K = \left\{ x \in \re^n\,
\left| \baray{c}
 g(x) \geq 0
\earay \right.
\right\}
\]
defined by a single polynomial $g$.
A critical point $u$ of $f$ on $K$ satisfies
\[
\nabla f(u) = \mu \nabla g(u),\quad \mu g(u) = 0.
\]
When $V_{\re}(g)$ is nonsingular,
$u$ is a critical point if and only if
\[
 g(u) \cdot \nabla f(u) \  = 0, \quad
\rank\,   \bbm \nabla f(u) &  \nabla g(u) \ebm \leq 1.
\]
The above rank condition can be replaced by a set of equations, say,
$
\varphi_1(u) = \cdots = \varphi_N(u) = 0,
$
using the $2$-by-$2$ minors.
Hence, the critical values are feasible objective values of the problem
\[
\min \quad f(x) \quad s.t. \quad g(x) \geq 0, \,
\varphi_i(x) = 0 \,(i \in [N]).
\]
Then, the method in \S4
can be applied to get all critical values.
Properties like in Theorems~\ref{thm:min:c1} and \ref{thm:rth:cr}
also hold. For more general closed semialgebraic sets,
we refer to \cite{Nie-jac}
on how to construct polynomials $\varphi_i$.

\subsection{The number of local minimizers}

Every local minimizer is a critical point.
Hence, the number of local minimizers
is at most the number of critical points.
The number of all complex critical points, for generic polynomials,
is given in \cite{NR09}. However, not every critical point is a local minimizer.
Thus, such a number is generally an upper bound for the number of local minimizers,
but it is likely not sharp. For instance, in Example~\ref{exmp3.9},
the number of all complex critical points given by \cite{NR09}
is $64$, while there is only one local minimizer.
The number of local minimizers could be exponentially many.
For instance, for the polynomial
$\sum_{i=1}^n (x_i^2-1)^2$ (thanks to an anonymous referee), the number is $2^n$.

For a polynomial $f$, let $\mc{L}(f)$ be the set of its local minimizers.
Clearly, $\mc{L}(f)$ is a subset of $\mc{H}(f)$, the set of $H$-minimizers of $f$.
For generic $f$, these two sets are same.
The set $\mc{H}(f)$ is defined by $\nabla f(x) =0$, $\nabla^2 f(x) \succeq 0$.
So, $\mc{H}(f)$ is a basic closed semialgebraic set.
If $\mc{H}(f)$ is nonempty and bounded, then its number of connected
components is $b_0\big(\mc{H}(f)\big)$,
the zero-th Betti number of $\mc{H}(f)$. Therefore, the number of
local minimizers is generally equal to $b_0\big(\mc{H}(f)\big)$.
Usually, it is hard to compute $b_0\big(\mc{H}(f)\big)$.
We refer to \cite[\S6.2]{AlgRAG} for Betti numbers
of semialgebraic sets.

%
%

\subsection{Infimums of polynomials}

For a polynomial $f$, an important problem is to compute its infimum $f^*$
over the space $\re^n$. If $f^*$ is finite and achievable,
the gradient SOS method in \cite{NDS} is efficient for computing $f^*$.
If $f^*$ is finite but not achievable, that method cannot find $f^*$.
For such cases, there exist other type SOS relaxations for computing $f^*$
(cf.~\cite{GEZ10,Schw06,VuiSon}).
The problem about infimums
(i.e., decide whether $f^*$ is finite or not;
if finite, compute $f^*$ and decide whether $f^*$ is achievable or not)
can be solved by real algebraic techniques (e.g., quantifier eliminations).
We refer to the book~\cite{AlgRAG}
and the work~\cite{GrSED11,SED08} for such techniques.
It is an interesting future work to combine
real algebraic and semidefinite relaxation techniques
for computing infimums and the hierarchy of local minimums.

\bigskip \noindent
{\bf Acknowledgement} \,
The author would like to thank Didier Henrion
for his comments on using polynomial matrix inequalities
in {\tt GlotpiPoly} and {\tt YALMIP}. He also likes to thank
the editors and anonymous referees for the useful suggestions.
The research was partially supported by the NSF grants
DMS-0844775 and DMS-1417985.

\end{document}